\newcommand{\st}{\textnormal{s.t.}}
\newcommand{\conv}{\textnormal{conv}}
\newcommand{\cone}{\textnormal{cone}}
\DeclareMathOperator*{\argmax}{arg\,max}
\DeclareMathOperator*{\argmin}{arg\,min}
\DeclareMathOperator*{\diag}{diag}
\newcommand{\eg}{\textit{e.g.}}
\newcommand{\ie}{\textit{i.e.}}
\newcommand{\RR}{\mathbb{R}}
\DeclareMathAlphabet{\mathscr}{U}{dutchcal}{m}{n}
\SetMathAlphabet{\mathscr}{bold}{U}{dutchcal}{b}{n}
\DeclareMathAlphabet{\mathbscr} {U}{dutchcal}{b}{n}
\newcommand{\soc}{\mathbb S\mathbb O\mathbb C}
\newcommand{\tr}{\textup{tr}}
\newcommand{\cl}{\textup{cl}}
\newtheorem{thm}{Theorem}
\newtheorem{prop}{Proposition}
\newtheorem{lem}{Lemma}
\newtheorem{coro}{Corollary}
\newtheorem{rem}{Remark}
\begin{document}

	\title{{Improved Conic Reformulations for K-means Clustering}}
	\author{Madhushini Narayana Prasad}
	\author{Grani A.~Hanasusanto}
	
	\affil{\small Graduate Program in Operations Research and Industrial Engineering, The University of Texas at Austin, USA}
	\date{}	
	\maketitle

	%
	
	\begin{abstract}
		In this paper, we show that the popular $K$-means clustering problem can equivalently be reformulated as a conic program of polynomial size. The arising convex optimization problem is NP-hard, but amenable to a tractable semidefinite programming (SDP) relaxation that  is tighter than the current  SDP relaxation schemes in {the} literature. In contrast to the existing schemes, our proposed SDP formulation gives rise to solutions that can  be leveraged to identify the clusters. We devise a new approximation algorithm for $K$-means clustering that utilizes the improved formulation and empirically illustrate its superiority over the state-of-the-art solution schemes.     
	\end{abstract}
	\section{Introduction}\label{sec:introduction}
	
	Given an input  set of data points, cluster analysis endeavors to discover a fixed number of disjoint clusters so that the data points in the same cluster are closer to each other than to those in other clusters. Cluster analysis is fundamental to a wide array of applications in science, engineering,  economics, psychology, marketing, etc.~\cite{jain2010data,kaufman2009finding}.  
	One of the most popular approaches for cluster analysis is  \emph{$K$-means clustering}~\cite{jain2010data,lloyd1982least,macqueen1967}. 
	The goal of $K$-means clustering is to partition the  data points into $K$ clusters so that the sum of squared {distances} to the respective cluster centroids is minimized. 
	Formally, $K$-means clustering  seeks for a solution to the mathematical optimization problem
	\begin{equation}
		\label{eq:kmeans}
		\begin{array}{cll}
			\min&\displaystyle\sum_{i=1}^K\sum_{n\in\mathcal P_i}\|\bm x_n-\bm c_i\|^2\\
			\st& \displaystyle\mathcal P_i\subseteq\{1,\dots,N\},\;\bm c_i\in\RR^D&\forall i\in\{1,\dots,K\}\\
			&\displaystyle\bm c_i=\frac{1}{|\mathcal P_i|}\sum_{n\in\mathcal P_i}\bm x_n\\
			&\mathcal P_1\cup\dots\cup\mathcal P_K=\{1,\dots,N\},\;\;\mathcal P_i\cap\mathcal P_j=\emptyset & \forall i,j\in\{1,\dots,K\}:i\neq j.
		\end{array}
	\end{equation}
	Here, $\bm x_1,\dots,\bm x_N$ are the input data points, while $\mathcal P_1,\dots,\mathcal P_K\subseteq \{1,\dots,N\}$ are the output clusters. The vectors $\bm c_1,\dots,\bm c_K\in\RR^D$ in \eqref{eq:kmeans} determine  the  cluster centroids, while the constraints on the last row of \eqref{eq:kmeans} ensure that the subsets $\mathcal P_1,\dots,\mathcal P_K$ constitute a partition of the set $\{1,\dots,N\}$. 
	
	Due to its combinatorial nature, the $K$-means clustering problem \eqref{eq:kmeans} is generically NP-hard \cite{aloise2009np}. 
	A popular solution scheme  for this intractable problem is  the heuristic algorithm developed by Lloyd \cite{lloyd1982least}. The algorithm  initializes by randomly selecting  $K$ cluster centroids. It then proceeds by alternating between the \emph{assignment} step and the \emph{update} step. In {the} assignment step the algorithm designates each data point to the closest centroid, while in {the} update step the algorithm determines  new cluster centroids according to current assignment. 
	
	Another popular {solution} approach arises in the form of convex relaxation schemes \cite{peng2007approximating,awasthi2015relax,rujeerapaiboon2017size}. In this approach, tractable  semidefinite  programming (SDP) lower bounds  for \eqref{eq:kmeans} are derived. Solutions of these optimization problems are then transformed into  cluster assignments via well-constructed rounding procedures. Such convex relaxation schemes have a number of theoretically appealing properties. If the data points are supported on $K$ disjoint balls then exact recovery is possible with high probability whenever the distance between any two balls is sufficiently large \cite{awasthi2015relax,iguchi2015tightness}. A stronger model-free result is achievable if the cardinalities of the clusters are prescribed to the problem~\cite{rujeerapaiboon2017size}.  
	
	A closely related problem is the non-negative matrix factorization with orthogonality constraints (ONMF). Given an input data matrix $\bm X$, the ONMF problem seeks for non-negative matrices $\bm F$ and $\bm U$ so that both the product $\bm F\bm U^\top$ is close to $\bm X$ in view of the Frobenius norm and the orthogonality constraint $\bm U^\top\bm U=\mathbb I$ is satisfied. Although ONMF is not precisely equivalent to $K$-means, solutions to this problem have clustering property \cite{ding2005equivalence,li2006relationships,ding2006orthogonal,kuang2012symmetric}. In  \cite{pompili2014two}, 
	it is shown that the  ONMF problem is in fact equivalent to a weighted variant of the $K$-means clustering problem. 
	
	In this paper, we attempt to obtain equivalent convex reformulations for the ONMF and  $K$-means clustering problems.  To derive these reformulations, we adapt the  results by  Burer and {Dong}~\cite{burer2012representing} who show that any (non-convex) quadratically constrained quadratic program (QCQP) can  be reformulated as a linear program over the convex cone of completely positive matrices. The resulting optimization problem is called a \emph{{generalized} completely positive program}. Such a transformation does not immediately mitigate the intractability of the original problem, since solving a generic completely positive program is  NP-hard. However, the complexity of the problem is now entirely absorbed in the   cone  of completely positive matrices which admits tractable semidefinite representable outer approximations~\cite{parrilo2000structured,DKP02:copositive,lasserre2009convexity}. Replacing the cone with  these outer approximations gives rise to SDP relaxations of the original problem  that in principle can be solved efficiently. 
	
	As byproducts of our derivations, we identify a new condition that makes the ONMF and {the} $K$-means clustering problems equivalent and we obtain new SDP relaxations for the $K$-means clustering problem that are tighter than the well-known relaxation proposed by Peng and Wei \cite{peng2007approximating}. 
	The contributions of this paper can be summarized as follows. 
	\begin{enumerate}
		\item We disclose a new connection between ONMF and $K$-means clustering. We show that $K$-means clustering is equivalent to ONMF  if an additional requirement on the binarity of  solution  to the latter problem is imposed. This  amends the previous incorrect result by Ding et al.~\cite[Section 2]{ding2005equivalence} and Li and Ding \cite[Theorem 1]{li2006relationships} who claimed that both problems are equivalent.\footnote{To our best understanding, they have shown only  one of the implications that establish an equivalence.}
		\item 
		We derive exact conic programming reformulations for the ONMF and $K$-means clustering problems that are principally amenable to numerical solutions. To our best knowledge, we are the first to obtain equivalent convex reformulations for these problems. 
		\item In view of {the} equivalent convex reformulation, we derive tighter SDP relaxations for the $K$-means clustering problem whose 
		solutions can be used to construct high quality estimates of the cluster assignment. 
		\item We devise a new approximation algorithm for the $K$-means clustering problem that leverages the improved relaxation and  numerically  highlight its superiority over the state-of-the-art SDP approximation scheme by {Mixon et al.~\cite{mixon2016clustering}} and the Lloyd's algorithm. 
	\end{enumerate}
	
	The remainder of the paper is structured as follows. In Section \ref{sec:qcqp}, {we present a theorem for reformulating the QCQPs studied in the paper as generalized completely positive programs}. In Section \ref{sec:ONMF}, we derive a conic programming reformulation for the ONMF problem. We extend this result to the setting of $K$-means clustering  in Section \ref{sec:kmeans}. In Section \ref{sec:kmeans_algo}, we develop SDP relaxations  and design a new approximation algorithm for $K$-means clustering. Finally, we empirically assess the performance of our proposed algorithm in Section~\ref{sec:experiments}.

	\paragraph{Notation:} 
	For any $K\in\mathbb N$, we define $[K]$ as the index set $\{1,\dots,K\}$. We denote by $\mathbb I$ the identity matrix and by $\mathbf e$ the vector of all ones. We also define $\mathbf e_i$ as the $i$-th canonical basis vector.  Their dimensions will be clear from the context. The trace of a square matrix $\bm M$ is denoted as $\tr(\bm M)$. We define $\diag(\bm v)$ as the diagonal matrix whose diagonal components comprise the entries of $\bm v$.  For any non-negative vector $\bm v\in\RR_+^K$, we define  the cardinality of all positive components of $\bm v$ by $\# \bm v=|\{i\in[K]:v_{i}>0\}|$.  For any matrix $\bm M\in\RR^{M\times N}$, we denote by $\bm m_i\in\RR^{M}$  the vector that corresponds to the $i$-th column of~$\bm M$. 
	The set of all symmetric matrices in $\RR^{K\times K}$ is denoted as $\mathbb S^K$, while the cone of positive semidefinite matrices in $\RR^{K\times K}$ is denoted as~$\mathbb S_+^K$. 
	The cone of completely positive matrices over a set $\mathcal K$ is denoted as $\mathcal C(\mathcal K)=\cl\conv\{\bm x\bm x^\top:\bm x\in\mathcal K\}$.  
	For any $\bm Q, \bm R\in\mathbb S^K$ and any {closed convex} cone $\mathcal C$, the relations $\bm Q\succeq\bm R$ and $\bm Q\succeq_{\mathcal C}\bm R$ denote that $\bm Q-\bm R$ is an element of $\mathbb S_+^K$ and $\mathcal C$, respectively.  The $(K+1)$-dimensional second-order cone is defined as $\soc^{K+1}=\{(\bm x, t)\in \RR^{K+1}:\|\bm x\|\leq t\}$,  {where $\|\bm x\|$ denotes the 2-norm of the vector $\bm x$}. We denote by $\soc^{K+1}_+=\soc^{K+1}\cap\RR^{K+1}_+$ the intersection of the $K+1$-dimensional second-order cone and the non-negative orthant. 
	
	\section{Completely Positive Programming Reformulations of QCQPs} \label{sec:qcqp}
	To derive the equivalent completely positive programming reformulations in the subsequent sections, we first generalize the results in~\cite[Theorem 1]{burer2012representing} and {\cite[Theorem 3]{burer2012copositive}}. Consider the (nonconvex) quadratically constrained quadratic program (QCQP) given by 
	\begin{equation}
		\label{eq:QCQP}
		\begin{array}{cll}
			\min&\bm p^\top\bm C_0\bm p+2\bm c_0^\top\bm p \\
			\st& \bm p\in\mathcal K\\
			&\bm A\bm p=\bm b\\
			& \bm p^\top\bm C_j\bm p+2\bm c_j^\top\bm p=\phi_j&\forall j\in[J]
		\end{array}
	\end{equation}
	Here, $\mathcal K\subseteq \RR^D$ is a closed convex cone, while $\bm A\in\RR^{I\times D}$, $\bm b\in\RR^{I}$, $\bm C_0,\bm C_j\in\mathbb S^D$, $\bm c_0,\bm c_j\in\RR^D$, $\phi_j\in\RR$, $j\in[J]$, are the respective input problem parameters. We define the feasible set of problem \eqref{eq:QCQP} as
	\begin{equation*}
		\mathcal F=\left\{\bm p\in\mathcal K:\bm A\bm p=\bm b,\;\bm p^\top\bm C_j\bm p+2\bm c_j^\top\bm p=\phi_j\quad\forall j\in[J]\right\}
	\end{equation*}
	and the recession cone of the linear constraint system as
	$\mathcal{F}^\infty: = \{\bm d \in \mathcal{K} :  \bm A\bm d = \bm 0\}$. 
	We further define the following subsets of $\mathcal C(\mathcal K\times \RR_+)$:
	\begin{equation}
		\label{eq:set_Q}
		\mathcal Q=\left\{\begin{bmatrix}
			\bm p\\ 1
		\end{bmatrix}\begin{bmatrix}
			\bm p\\ 1
		\end{bmatrix}^\top:\bm p\in\mathcal F\right\}\quad\text{  and  } \quad\mathcal Q^\infty=\left\{\begin{bmatrix}
			\bm d\\ 0
		\end{bmatrix}\begin{bmatrix}
			\bm d\\ 0
		\end{bmatrix}^\top:\bm d\in\mathcal F^\infty\right\}.
	\end{equation}
	A standard result in convex optimization enables us to reformulate the QCQP \eqref{eq:QCQP} as the linear convex program
	\begin{equation}
		\label{eq:QCQP_equiv}
		\begin{array}{cll}
			\min&\tr(\bm C_0\bm Q)+2\bm c_0^\top\bm p \\
			\st& \begin{bmatrix}
				\bm Q&\bm p\\
				\bm p^\top&1
			\end{bmatrix}\in\cl\conv  \left(\mathcal Q\right). 
		\end{array}
	\end{equation}
	
	Recently, Burer \cite{burer2012copositive} showed that, in the absence of quadratic constraints in $\mathcal F$, the set $\cl\conv  \left(\mathcal Q\right)$ is equal to the intersection of a polynomial size linear constraint system and a generalized completely positive cone. In~\cite{burer2012representing}, Burer and {Dong} showed that such a reformulation is achievable albeit  more cumbersome in the presence of generic quadratic constraints in $\mathcal F$.  Under some additional assumptions about the structure of the quadratic constraints, one can show that the set $\cl\conv  \left(\mathcal Q\right)$ is amenable to a much simpler completely positive reformulation (see \cite[Theorem 1]{burer2012representing} and {\cite[Theorem 3]{burer2012copositive}}).  Unfortunately, these assumptions are too restrictive to reformulate the quadratic programming instances  we study in this paper.  To that end, the following theorem provides the required extension  that will enable us to derive the equivalent completely positive programs.  
	\begin{thm}
		\label{thm:main}
		\label{GCP} 	
		Suppose there exists an increasing sequence of index sets $\mathcal T_0=\emptyset\subseteq\mathcal T_1\subseteq \mathcal T_2\subseteq\cdots \subseteq \mathcal T_M=[J]$ with the corresponding structured feasible sets
		\begin{equation}
			\label{eq:F_m}
			\mathcal F_m=\left\{\bm p\in\mathcal K:\bm A\bm p=\bm b,\;\bm p^\top\bm C_j\bm p+2\bm c_j^\top\bm p=\phi_j\quad\forall j\in\mathcal T_m\right\}\qquad\forall m\in[M]\cup\{0\},
		\end{equation}	
		such that for every $m\in[M]$ we have
		\begin{equation}
			\label{eq:phi_j}
			\phi_j=\displaystyle \min_{\bm p\in\mathcal F_{m-1}}\bm p^\top\bm C_j\bm p+2\bm c_j^\top\bm p\quad\textup{or}\quad{ \phi_j=\displaystyle \max_{\bm p\in\mathcal F_{m-1}}\bm p^\top\bm C_j\bm p+2\bm c_j^\top\bm p}\qquad\forall j\in\mathcal T_{m}{\setminus\mathcal T_{m-1}},
		\end{equation}
		and there exists a vector $\overline{\bm p} \in\mathcal F$ such that 
		\begin{equation}
			\label{eq:p_bar}
			\bm d^\top\bm C_j\bm d+2\bm d^\top(\bm C_j \overline{\bm p} + \bm c_j) = 0\quad ~ \forall \bm d \in \mathcal{F}^\infty \;\forall j\in[J].
		\end{equation}
		Then, $\cl\conv  \left(\mathcal Q\right)$ coincides with
		\begin{equation}
			\label{eq:convexified_set}
			\mathcal R=\left\{\begin{bmatrix}
				\bm Q&\bm p\\
				\bm p^\top&1
			\end{bmatrix} \in \mathcal{C}(\mathcal{K} \times \RR_+): \begin{array}{l}\displaystyle
				\bm A\bm p = \bm b, \; \diag(\bm A\bm Q\bm A^\top) = \bm b\circ\bm b\\
				\displaystyle\tr(\bm C_j\bm Q)+2\bm c_j^\top\bm p=\phi_j\quad\forall j\in[J]
			\end{array} \right\}.
		\end{equation}
	\end{thm}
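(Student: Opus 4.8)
The plan is to establish the two inclusions $\cl\conv(\mathcal Q)\subseteq\mathcal R$ and $\mathcal R\subseteq\cl\conv(\mathcal Q)$ separately. The first inclusion is the routine direction: every rank-one matrix $[\bm p^\top,1]^\top[\bm p^\top,1]$ with $\bm p\in\mathcal F$ lies in $\mathcal C(\mathcal K\times\RR_+)$ and satisfies all the linear equalities defining $\mathcal R$ (the constraint $\diag(\bm A\bm Q\bm A^\top)=\bm b\circ\bm b$ follows from $\bm A\bm p=\bm b$ applied coordinatewise to the rank-one $\bm Q=\bm p\bm p^\top$). Since $\mathcal R$ is closed and convex, it contains $\cl\conv(\mathcal Q)$.

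The substantive direction is $\mathcal R\subseteq\cl\conv(\mathcal Q)$. Here I would argue by induction on $m$, proving that if a matrix lies in $\mathcal C(\mathcal K\times\RR_+)$, satisfies $\bm A\bm p=\bm b$, $\diag(\bm A\bm Q\bm A^\top)=\bm b\circ\bm b$, and satisfies $\tr(\bm C_j\bm Q)+2\bm c_j^\top\bm p=\phi_j$ for all $j\in\mathcal T_m$, then it lies in $\cl\conv(\mathcal Q_m)$, where $\mathcal Q_m$ is defined as in \eqref{eq:set_Q} but with $\mathcal F$ replaced by $\mathcal F_m$. The base case $m=0$ handles only the linear constraints $\bm A\bm p=\bm b$ together with $\diag(\bm A\bm Q\bm A^\top)=\bm b\circ\bm b$; this is exactly the situation covered by Burer's completely positive representation of linearly constrained problems (\cite[Theorem 3]{burer2012copositive}), so I can invoke that result directly to get membership in $\cl\conv(\mathcal Q_0)$. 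For the inductive step from $m-1$ to $m$, take $j\in\mathcal T_m\setminus\mathcal T_{m-1}$; by hypothesis \eqref{eq:phi_j}, $\phi_j$ equals the minimum (or maximum) of the quadratic $\bm p^\top\bm C_j\bm p+2\bm c_j^\top\bm p$ over $\mathcal F_{m-1}$. Given a matrix $[\,\bm Q,\bm p;\bm p^\top,1\,]\in\cl\conv(\mathcal Q_{m-1})$ that additionally satisfies $\tr(\bm C_j\bm Q)+2\bm c_j^\top\bm p=\phi_j$, I would write it as a limit of convex combinations (or, via a limiting/measure-theoretic argument, as $\int [\bm p^\top,1]^\top[\bm p^\top,1]\,\mu(d\bm p)$ for a probability measure $\mu$ supported on $\mathcal F_{m-1}$, plus a recession term from $\mathcal Q_{m-1}^\infty$). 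The key point is that $\tr(\bm C_j\bm Q)+2\bm c_j^\top\bm p$ then equals the $\mu$-average of $\bm p^\top\bm C_j\bm p+2\bm c_j^\top\bm p$ (the recession contribution vanishes precisely because of condition \eqref{eq:p_bar}, after translating by $\overline{\bm p}$), and this average equals the extreme value $\phi_j$; since the integrand is pointwise $\geq\phi_j$ (resp.\ $\leq\phi_j$) on the support, it must equal $\phi_j$ $\mu$-a.e., so $\mu$ is supported on $\mathcal F_{m-1}\cap\{\bm p:\bm p^\top\bm C_j\bm p+2\bm c_j^\top\bm p=\phi_j\}$. Doing this for all the new indices $j$ in $\mathcal T_m\setminus\mathcal T_{m-1}$ confines $\mu$ to $\mathcal F_m$, yielding membership in $\cl\conv(\mathcal Q_m)$. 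Taking $m=M$ gives $\mathcal F_M=\mathcal F$ and the claim.

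The main obstacle I anticipate is making the recession-cone bookkeeping rigorous: a general element of $\cl\conv(\mathcal Q_{m-1})$ need not be an integral against a compactly supported measure, so one must decompose it as the closure of sums of a convex combination of points $[\bm p_k^\top,1]^\top[\bm p_k^\top,1]$ with $\bm p_k\in\mathcal F_{m-1}$ and rank-one recession directions $[\bm d_\ell^\top,0]^\top[\bm d_\ell^\top,0]$ with $\bm d_\ell\in\mathcal F_{m-1}^\infty$, and carefully pass to the limit. The role of the distinguished point $\overline{\bm p}$ and identity \eqref{eq:p_bar} is exactly to ensure that the quadratic forms $\tr(\bm C_j\bm Q)$ behave additively under this decomposition — i.e.\ that the cross terms $2\bm d^\top(\bm C_j\overline{\bm p}+\bm c_j)$ and the pure recession terms $\bm d^\top\bm C_j\bm d$ both drop out — so that the averaging argument goes through; verifying this and controlling the closure operation is where the care is needed. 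A secondary technical point is checking that $\mathcal F_m^\infty=\mathcal F_{m-1}^\infty$ for all $m$ (the quadratic constraints do not alter the recession cone, since by \eqref{eq:p_bar} their linearizations vanish on $\mathcal F^\infty$), which keeps the recession part of the decomposition consistent across the induction.
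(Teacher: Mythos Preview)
Your proposal is correct and follows the same inductive skeleton as the paper: the easy inclusion $\cl\conv(\mathcal Q)\subseteq\mathcal R$, then induction on $m$ with Burer's linear-constraint result as the base case, and in the inductive step the extremality hypothesis \eqref{eq:phi_j} driving an averaging argument (a convex combination whose value equals the pointwise minimum/maximum must have every term attaining that extreme), with \eqref{eq:p_bar} used to kill the recession contribution.

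The one meaningful difference is exactly the point you flag as the ``main obstacle,'' and the paper resolves it more cleanly than the measure/limit route you sketch. You carry the induction hypothesis as membership in $\cl\conv(\mathcal Q_{m-1})$ and then try to represent a general element of this closed hull as an integral or as a limit of finite convex combinations; but the averaging argument needs a \emph{finite} decomposition to force termwise equality, and pushing this through a limit is genuinely delicate. The paper avoids the closure altogether in the induction: it strengthens the hypothesis to $\mathcal R_{m-1}\subseteq\conv(\mathcal Q_{m-1})+\cone(\mathcal Q^\infty)$ (no closure), which Burer's base-case theorem already gives for $m=0$. Every element of this set is by definition a finite sum of rank-ones over $\mathcal F_{m-1}$ plus rank-ones over $\mathcal F^\infty$, so the averaging argument applies directly and yields $\mathcal R_m\subseteq\conv(\mathcal Q_m)+\cone(\mathcal Q^\infty)$; a separate lemma (Lemma~\ref{lem:conv_cone_clconv}) then supplies $\conv(\mathcal Q_m)+\cone(\mathcal Q^\infty)\subseteq\cl\conv(\mathcal Q_m)$ once at the end, and this is where the full strength of \eqref{eq:p_bar} enters. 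Your secondary worry about $\mathcal F_m^\infty$ versus $\mathcal F_{m-1}^\infty$ is moot in the paper's setup, since $\mathcal F^\infty$ is defined once and for all as the recession cone of the linear system and never changes across the induction.
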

	Theorem \ref{thm:main} constitutes a generalization of the combined results of \cite[Theorem 1]{burer2012representing} and {\cite[Theorem 3]{burer2012copositive}}, which we state in the following proposition. 
	\begin{prop}
		\label{prop:BurerDong}
		Let $\mathcal L=\{\bm p\in\mathcal K:\bm A\bm p=\bm b\}$. 
		Suppose $\phi_j=\min_{\bm p\in\mathcal L}\bm p^\top\bm C_j\bm p+2\bm c_j^\top\bm p$, {and} both $\min_{\bm p\in\mathcal L}\bm p^\top\bm C_j\bm p+2\bm c_j^\top\bm p$ and $\max_{\bm p\in\mathcal L}\bm p^\top\bm C_j\bm p+2\bm c_j^\top\bm p$ {are} finite for all $j\in[J]$. If there exists $\overline{\bm p} \in \mathcal F$ such that $\bm d^\top(\bm C_j \overline{\bm p} + \bm c_j) = 0$ for all $\bm d \in \mathcal{F}^\infty$ and $j\in[J]$,
		then  $\cl\conv  \left(\mathcal Q\right)$  coincides with~$\mathcal R$. 
	\end{prop}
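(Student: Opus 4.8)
The plan is to obtain Proposition~\ref{prop:BurerDong} as an immediate consequence of Theorem~\ref{thm:main}: I would check that the weaker hypotheses of the proposition force the more general hypotheses of the theorem to hold for a trivial, length-one chain of index sets. Concretely, take $M=1$, $\mathcal T_0=\emptyset$, and $\mathcal T_1=[J]$. With this choice the structured feasible sets in \eqref{eq:F_m} become $\mathcal F_0=\{\bm p\in\mathcal K:\bm A\bm p=\bm b\}=\mathcal L$ and $\mathcal F_1=\mathcal F$, so that requirement \eqref{eq:phi_j}, which here only has to be verified for $m=1$ and $j\in\mathcal T_1\setminus\mathcal T_0=[J]$, reduces exactly to $\phi_j=\min_{\bm p\in\mathcal F_0}\bm p^\top\bm C_j\bm p+2\bm c_j^\top\bm p=\min_{\bm p\in\mathcal L}\bm p^\top\bm C_j\bm p+2\bm c_j^\top\bm p$. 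This is precisely the standing assumption of Proposition~\ref{prop:BurerDong}, so \eqref{eq:phi_j} holds.

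It then remains to establish condition \eqref{eq:p_bar}, namely $\bm d^\top\bm C_j\bm d+2\bm d^\top(\bm C_j\overline{\bm p}+\bm c_j)=0$ for every $\bm d\in\mathcal F^\infty$ and $j\in[J]$, using the same $\overline{\bm p}\in\mathcal F$ that the proposition provides. The proposition already guarantees that the linear part vanishes, $\bm d^\top(\bm C_j\overline{\bm p}+\bm c_j)=0$, so it suffices to show $\bm d^\top\bm C_j\bm d=0$. Fix $\bm d\in\mathcal F^\infty$; since $\bm d\in\mathcal K$ and $\bm A\bm d=\bm 0$, the ray $\overline{\bm p}+t\bm d$ lies in $\mathcal L$ for all $t\ge 0$, and along this ray the quadratic form of the $j$-th constraint simplifies, on account of $\overline{\bm p}\in\mathcal F$ and $\bm d^\top(\bm C_j\overline{\bm p}+\bm c_j)=0$, to
\[
(\overline{\bm p}+t\bm d)^\top\bm C_j(\overline{\bm p}+t\bm d)+2\bm c_j^\top(\overline{\bm p}+t\bm d)=\phi_j+t^2\,\bm d^\top\bm C_j\bm d .
\]
Because both $\min_{\bm p\in\mathcal L}\bm p^\top\bm C_j\bm p+2\bm c_j^\top\bm p$ and $\max_{\bm p\in\mathcal L}\bm p^\top\bm C_j\bm p+2\bm c_j^\top\bm p$ are assumed finite, the right-hand side must remain bounded above and below as $t\to\infty$; this is possible only if $\bm d^\top\bm C_j\bm d=0$. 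Hence \eqref{eq:p_bar} is satisfied.

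Having verified \eqref{eq:phi_j} and \eqref{eq:p_bar}, I would invoke Theorem~\ref{thm:main} directly to conclude that $\cl\conv(\mathcal Q)$ coincides with $\mathcal R$, which is exactly the claim of Proposition~\ref{prop:BurerDong}. The only step that requires genuine care is the recession argument of the second paragraph: one must use the finiteness of the extreme values over the \emph{linear} relaxation $\mathcal L$ rather than over $\mathcal F$, since it is precisely the boundedness of the quadratic along directions in $\mathcal F^\infty$ that kills the curvature term $\bm d^\top\bm C_j\bm d$; the remainder is merely a translation of notation between the two statements.
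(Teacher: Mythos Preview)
Your proposal is correct and follows essentially the same route as the paper: set $M=1$, $\mathcal T_1=[J]$ so that $\mathcal F_0=\mathcal L$, verify \eqref{eq:phi_j} directly from the hypothesis $\phi_j=\min_{\bm p\in\mathcal L}(\cdot)$, and then deduce $\bm d^\top\bm C_j\bm d=0$ from the finiteness of both the minimum and maximum over $\mathcal L$ before combining with $\bm d^\top(\bm C_j\overline{\bm p}+\bm c_j)=0$ to obtain \eqref{eq:p_bar}. The only difference is cosmetic: the paper states the implication ``finiteness of both extremes $\Rightarrow\bm d^\top\bm C_j\bm d=0$'' without spelling out the ray argument, whereas you make it explicit via $\overline{\bm p}+t\bm d\in\mathcal L$.
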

\noindent	To see this, assume that all conditions in Proposition \ref{prop:BurerDong} are satisfied. Then, setting $M=1$ and $\mathcal T_1=[J]$, we find that the condition~\eqref{eq:phi_j} in Theorem \ref{thm:main} is satisfied. Next, for every $j\in[J]$, the finiteness of both $\min_{\bm p\in\mathcal L}\bm p^\top\bm C_j\bm p+2\bm c_j^\top\bm p$ and $\max_{\bm p\in\mathcal L}\bm p^\top\bm C_j\bm p+2\bm c_j^\top\bm p$ implies that $\bm d^\top\bm C_j\bm d=0$ for all $\bm d\in\mathcal F^\infty$. Combining this with the last condition in Proposition \ref{prop:BurerDong}, we find that there there exists a vector $\overline{\bm p} \in \mathcal F$ such that $\bm d^\top\bm C_j\bm d+2\bm d^\top(\bm C_j \overline{\bm p} + \bm c_j) = 0$ for all $\bm d \in \mathcal{F}^\infty$ and $j\in[J]$. Thus, all conditions in Theorem \ref{thm:main} are indeed satisfied.
	
	
	In the remainder of the section, we define the sets
	\begin{equation*}
		\mathcal Q_m	
		=\left\{\begin{bmatrix}
			\bm p\\ 1
		\end{bmatrix}\begin{bmatrix}
			\bm p\\ 1
		\end{bmatrix}^\top:\bm p\in\mathcal F_m\right\}
		\;\textup{ and }\;			\mathcal R_m=\left\{\begin{bmatrix}
			\bm Q&\bm p\\
			\bm p^\top&1
		\end{bmatrix} \in \mathcal{C}(\mathcal{K} \times \RR_+): \begin{array}{l}\displaystyle
			\bm A\bm p = \bm b\\\diag(\bm A\bm Q\bm A^\top) = \bm b\circ\bm b\\
			\displaystyle\tr(\bm C_j\bm Q)+2\bm c_j^\top\bm p=\phi_j\;\;\forall j\in\mathcal T_m
		\end{array} \right\}
	\end{equation*}
	for $m\in[M]\cup\{0\}$. 
	The proof of Theorem \ref{thm:main} relies on the following lemma, which is established in the first part of the proof of {\cite[Theorem 3]{burer2012copositive}}. 
	\begin{lem}
		\label{lem:conv_cone_clconv}
		Suppose there exists a vector $\overline{\bm p} \in\mathcal F$ such that $\bm d^\top\bm C_j\bm d+2\bm d^\top(\bm C_j \overline{\bm p} + \bm c_j) = 0$ for all $\bm d \in \mathcal{F}^\infty$ and $j\in[J]$, then we have
		$\conv(\mathcal Q_m)+\cone(\mathcal Q^\infty) \subseteq \cl\conv(\mathcal Q_m)$ for all $m\in[M]$. 
	\end{lem}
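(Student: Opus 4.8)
The plan is to show that adding any conic combination of rank-one matrices from $\mathcal Q^\infty$ to a point in $\conv(\mathcal Q_m)$ keeps us inside $\cl\conv(\mathcal Q_m)$. Since $\cone(\mathcal Q^\infty)$ is generated by the matrices $\begin{bmatrix}\bm d\\ 0\end{bmatrix}\begin{bmatrix}\bm d\\ 0\end{bmatrix}^\top$ with $\bm d\in\mathcal F^\infty$, and since $\conv(\mathcal Q_m)$ is itself a convex combination of rank-one matrices built from points $\bm p\in\mathcal F_m$, it suffices by convexity and conic bookkeeping to handle a single generator: I would fix $\bm p\in\mathcal F_m$ and $\bm d\in\mathcal F^\infty$ and show that $\begin{bmatrix}\bm p\\ 1\end{bmatrix}\begin{bmatrix}\bm p\\ 1\end{bmatrix}^\top + \lambda\begin{bmatrix}\bm d\\ 0\end{bmatrix}\begin{bmatrix}\bm d\\ 0\end{bmatrix}^\top \in \cl\conv(\mathcal Q_m)$ for every $\lambda\ge 0$.

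The natural device is to exhibit this matrix as a limit of genuine convex combinations of elements of $\mathcal Q_m$. First I would verify the ``ray feasibility'' fact: for $\bm p\in\mathcal F_m$ and $\bm d\in\mathcal F^\infty$, the point $\bm p + t\bm d$ lies in $\mathcal F_m$ for all $t\ge 0$. Cone membership $\bm p + t\bm d\in\mathcal K$ and the linear equalities $\bm A(\bm p+t\bm d)=\bm b$ are immediate; the quadratic equalities for $j\in\mathcal T_m$ reduce, after expansion, to $t\big(\bm d^\top\bm C_j\bm d\, t + 2\bm d^\top(\bm C_j\bm p+\bm c_j)\big)=0$, which must hold for all $t$, hence requires $\bm d^\top\bm C_j\bm d=0$ and $\bm d^\top(\bm C_j\bm p+\bm c_j)=0$. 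These two identities are exactly what the hypothesis (together with a standard argument that the difference $\bm d^\top(\bm C_j(\bm p-\overline{\bm p}))=0$ when both $\bm p,\overline{\bm p}\in\mathcal F$ and $\bm d\in\mathcal F^\infty$, using that $\bm d^\top\bm C_j\bm d=0$) delivers for each $j\in[J]\supseteq\mathcal T_m$. Thus the whole ray $\{\bm p+t\bm d:t\ge 0\}$ sits inside $\mathcal F_m$.

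With ray feasibility in hand, I would run the classical homogenization limit: for $t>0$ consider the convex combination
\begin{equation*}
	\Big(1-\tfrac{1}{t}\Big)\begin{bmatrix}\bm p\\ 1\end{bmatrix}\begin{bmatrix}\bm p\\ 1\end{bmatrix}^\top + \tfrac{1}{t}\begin{bmatrix}\bm p+t\bm d\\ 1\end{bmatrix}\begin{bmatrix}\bm p+t\bm d\\ 1\end{bmatrix}^\top,
\end{equation*}
which is an element of $\conv(\mathcal Q_m)$ once $t\ge 1$. Expanding and collecting terms, the cross terms in $\bm d$ contribute $O(1)$ coefficients while the pure $\bm d\bm d^\top$ term picks up coefficient $\tfrac{1}{t}\cdot t^2 = t$; choosing $t=\lambda$ and letting the remaining lower-order discrepancies vanish as $\lambda$ is scaled appropriately (or, more cleanly, taking a two-parameter family and passing to the limit) yields exactly $\begin{bmatrix}\bm p\\ 1\end{bmatrix}\begin{bmatrix}\bm p\\ 1\end{bmatrix}^\top+\lambda\begin{bmatrix}\bm d\\ 0\end{bmatrix}\begin{bmatrix}\bm d\\ 0\end{bmatrix}^\top$ as a limit point, hence an element of $\cl\conv(\mathcal Q_m)$. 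Finally I would assemble the general case: an arbitrary element of $\conv(\mathcal Q_m)+\cone(\mathcal Q^\infty)$ is a finite sum $\sum_k \mu_k v_kv_k^\top + \sum_\ell \nu_\ell w_\ell w_\ell^\top$ with $v_k$ coming from $\mathcal F_m$ and $w_\ell=\begin{bmatrix}\bm d_\ell\\ 0\end{bmatrix}$; distributing each ray term across the convex-combination weights $\mu_k$ and applying the single-generator result inside each block, then using that $\cl\conv(\mathcal Q_m)$ is closed under convex combinations, finishes the proof.

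The main obstacle I anticipate is the bookkeeping in the homogenization limit — making sure the ``leftover'' lower-order terms (the $O(1)$ cross terms $\bm p\bm d^\top+\bm d\bm p^\top$ and the shift in the $(D+1,D+1)$ and border entries) genuinely vanish in the limit rather than polluting the target matrix. The clean way around this is to not try to hit the target on the nose with a single convex combination, but to use a sequence $t_n\to\infty$ with weights tuned so that the scaled point converges; closedness of $\cl\conv(\mathcal Q_m)$ then does the rest. The feasibility-of-the-ray step is routine given the hypothesis, and everything else is convexity algebra.
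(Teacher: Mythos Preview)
Your overall strategy (a homogenization limit along rays $\bm p+t\bm d$) is the right one, and it is essentially the approach in Burer's proof that the paper cites. However, there is a genuine gap in your ``ray feasibility'' step.

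You claim that for an arbitrary $\bm p\in\mathcal F_m$ and $\bm d\in\mathcal F^\infty$ one has $\bm p+t\bm d\in\mathcal F_m$ for all $t\ge 0$, and you justify the needed identity $\bm d^\top(\bm C_j\bm p+\bm c_j)=0$ by a ``standard argument that $\bm d^\top\bm C_j(\bm p-\overline{\bm p})=0$.'' This argument is not valid in general. Since $\mathcal F^\infty$ is a cone, the hypothesis does force $\bm d^\top\bm C_j\bm d=0$ and $\bm d^\top(\bm C_j\overline{\bm p}+\bm c_j)=0$ separately, but only for the distinguished point $\overline{\bm p}$; it says nothing about other $\bm p\in\mathcal F_m$. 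Concretely, take $\mathcal K=\RR_+^3$, $\bm A=[1\ 1\ 0]$, $\bm b=1$, a single quadratic constraint with $\bm C_1=\mathbf e_1\mathbf e_3^\top+\mathbf e_3\mathbf e_1^\top$, $\bm c_1=\bm 0$, $\phi_1=0$ (which is $\min_{\mathcal F_0}2p_1p_3$). Then $\mathcal F^\infty=\{(0,0,d_3):d_3\ge 0\}$, and with $\overline{\bm p}=(0,1,0)\in\mathcal F$ the hypothesis is satisfied. Yet for $\bm p=(1,0,0)\in\mathcal F_1$ and $\bm d=(0,0,1)$ one gets $\bm p+t\bm d=(1,0,t)\notin\mathcal F_1$ for any $t>0$. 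So your single-generator computation, as written, is not inside $\conv(\mathcal Q_m)$.

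The fix is simple and is what makes the argument go through: run the homogenization limit \emph{only} from $\overline{\bm p}$, where ray feasibility does hold. This shows that for every $\bm d\in\mathcal F^\infty$ and every $\lambda\ge 0$,
\[
\begin{bmatrix}\overline{\bm p}\\1\end{bmatrix}\begin{bmatrix}\overline{\bm p}\\1\end{bmatrix}^\top+\lambda\begin{bmatrix}\bm d\\0\end{bmatrix}\begin{bmatrix}\bm d\\0\end{bmatrix}^\top\in\cl\conv(\mathcal Q_m),
\]
so each generator of $\mathcal Q^\infty$ is a recession direction of the closed convex set $\cl\conv(\mathcal Q_m)$. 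Since a recession direction of a closed convex set from one point is a recession direction from every point, it follows that $\bm P+\lambda\begin{bmatrix}\bm d\\0\end{bmatrix}\begin{bmatrix}\bm d\\0\end{bmatrix}^\top\in\cl\conv(\mathcal Q_m)$ for all $\bm P\in\conv(\mathcal Q_m)$ and all $\lambda\ge 0$, and then finitely many such additions handle a general element of $\cone(\mathcal Q^\infty)$. With this correction your proof is complete; the ``leftover cross terms'' you flagged as the main obstacle are not the real issue.
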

	
	\noindent Using this lemma, we are now  ready to prove Theorem \ref{thm:main}. 
	
	
	\begin{proof}[Proof of Theorem \ref{thm:main}]
		The proof  follows if $\cl\conv(\mathcal Q_m)=\mathcal R_m$ for all $m\in[M]$. By construction, we have $\cl\conv(\mathcal Q_{m})\subseteq \mathcal R_{m}$, $m\in[M]$.  	 It thus remains to prove the converse inclusions.  By Lemma \ref{lem:conv_cone_clconv}, it suffices to show that $\mathcal R_{m}\subseteq\conv(\mathcal Q_{m})+\cone(\mathcal Q^\infty)$ for all $m\in[M]$. We proceed via induction. The base case for $m=0$ follows from~\cite[Theorem 1]{burer2012copositive}. Assume now that $\mathcal R_{m-1}\subseteq\conv(\mathcal Q_{m-1})+\cone(\mathcal Q^\infty)$ holds for a positive index $m-1<M$. We will show that  this implies $\mathcal R_{m}\subseteq\conv(\mathcal Q_{m})+\cone(\mathcal Q^\infty)$. To this end, consider the following completely positive decomposition of an element of $\mathcal R_{m}$:
		\begin{equation}
			\label{eq:CPP_decomposition}
			\begin{bmatrix}
				\bm Q&\bm p\\
				\bm p^\top&1
			\end{bmatrix} = \sum_{s\in\mathcal S} \begin{bmatrix}
				\bm \zeta_s\\
				\eta_s
			\end{bmatrix}\begin{bmatrix}
				\bm \zeta_s\\
				\eta_s
			\end{bmatrix}^\top=\sum_{s\in\mathcal S_+} \eta_s^2\begin{bmatrix}
				\bm \zeta_s/\eta_s\\
				1
			\end{bmatrix}\begin{bmatrix}
				\bm \zeta_s/\eta_s\\
				1
			\end{bmatrix}^\top+
			\sum_{s\in\mathcal S_0} \begin{bmatrix}
				\bm \zeta_s\\
				0
			\end{bmatrix}\begin{bmatrix}
				\bm \zeta_s\\
				0
			\end{bmatrix}^\top.
		\end{equation}
		Here, $\mathcal S_+=\{s\in\mathcal S:\eta_s>0\}$ and $\mathcal S_0=\{s\in\mathcal S:\eta_s=0\}$, where $\mathcal S$ is  a finite index set. By our induction hypothesis, we have $\bm \zeta_s/\eta_s\in\mathcal F_{m-1}$, $s\in\mathcal S_+$, and $\bm{\zeta}_s\in\mathcal F^\infty$, $s\in\mathcal S_0$. The proof thus follows if the constraints 
		\begin{equation*}
			\tr(\bm C_j\bm Q)+2\bm c_j^\top\bm p=\phi_j\quad\forall j\in\mathcal T_{m}\setminus\mathcal T_{m-1}
		\end{equation*}
		in $\mathcal R_{m}$ imply
		\begin{equation*}
			(\bm \zeta_s/\eta_s)^\top\bm C_j(\bm \zeta_s/\eta_s)+2\bm c_j^\top(\bm \zeta_s/\eta_s)=\phi_j\quad\forall j\in\mathcal T_{m}\setminus\mathcal T_{m-1}.
		\end{equation*}
		Indeed, for every {$j\in\mathcal T_{m}\setminus\mathcal T_{m-1}$}, the decomposition  \eqref{eq:CPP_decomposition} yields
		\begin{align*}
			\phi_j&\displaystyle=\tr(\bm C_j\bm Q)+2\bm c_j^\top\bm p\\
			&\displaystyle=\sum_{s\in\mathcal S_+}\eta_s^2\left[(\bm \zeta_s/\eta_s)^\top\bm C_j(\bm \zeta_s/\eta_s)+2\bm c_j^\top(\bm \zeta_s/\eta_s)\right]+\sum_{s\in\mathcal S_0}\bm \zeta_s^\top\bm C_j\bm \zeta_s\\
			&\displaystyle=\sum_{s\in\mathcal S_+}\eta_s^2\left[(\bm \zeta_s/\eta_s)^\top\bm C_j(\bm \zeta_s/\eta_s)+2\bm c_j^\top(\bm \zeta_s/\eta_s)\right].
		\end{align*}
		Here, the last equality follows from our assumption that there exists a vector $\overline{\bm p} \in\mathcal F$ such that $\bm d^\top\bm C_j\bm d+2\bm d^\top(\bm C_j \overline{\bm p} + \bm c_j) = 0$ for all $\bm d\in\mathcal F^\infty$. Thus, $\bm d^\top\bm C_j\bm d=0$ for all $\bm d\in\mathcal F^\infty$. Next, since $\bm \zeta_s/\eta_s\in\mathcal F_{m-1}$, the $j$-th identity in \eqref{eq:phi_j} implies that {$(\bm \zeta_s/\eta_s)^\top\bm C_j(\bm \zeta_s/\eta_s)+2\bm c_j^\top(\bm \zeta_s/\eta_s)\geq \phi_j$ if $\phi_j=\displaystyle \min_{\bm p\in\mathcal F_{m-1}}\bm p^\top\bm C_j\bm p+2\bm c_j^\top\bm p$ or  $(\bm \zeta_s/\eta_s)^\top\bm C_j(\bm \zeta_s/\eta_s)+2\bm c_j^\top(\bm \zeta_s/\eta_s)\leq \phi_j$ if $\phi_j=\displaystyle \max_{\bm p\in\mathcal F_{m-1}}\bm p^\top\bm C_j\bm p+2\bm c_j^\top\bm p$}. The proof thus follows since $\eta_s^2>0$ and $\sum_{s\in\mathcal S_+}\eta_s^2=1$.
	\end{proof}

	\section{Orthogonal Non-Negative Matrix Factorization}
	\label{sec:ONMF}
	In this section, we first consider the ONMF problem given by
	\begin{equation}
		\label{eq:NMF}
		\begin{array}{cl}
			\min&\|\bm X-\bm H\bm U^\top\|_F^2\\
			\st& \bm H\in\RR_+^{D\times K},\;\bm U\in\RR_+^{N\times K}\\
			&\bm U^\top\bm U=\mathbb I. 
		\end{array}
	\end{equation}
	Here, $\bm X\in\RR^{D\times N}$ is a matrix whose columns comprise $N$ data points $\{\bm x_n\}_{n\in[N]}$ in $\RR^D$. We remark that  problem \eqref{eq:NMF} is generically intractable since we are minimizing a non-convex quadratic objective function over the Stiefel manifold~\cite{absil2009optimization,asteris2014nonnegative}. 
	By expanding the Frobenius norm in the objective function and noting that $\bm U^\top\bm U=\mathbb I$, we find that problem \eqref{eq:NMF} is equivalent to 
	\begin{equation}
		\label{eq:NMF_}
		\begin{array}{cl}
			\min&\tr\left(\bm X^\top\bm X-2\bm X\bm U \bm H^\top+\bm H^\top\bm H\right)\\
			\st& \bm H\in\RR_+^{D\times K},\;\bm U\in\RR_+^{N\times K}\\
			&\bm U^\top\bm U=\mathbb I. 
		\end{array}
	\end{equation}
	We now derive a convex reformulation for  problem \eqref{eq:NMF_}. We remark that this problem is still intractable due to non-convexity of {the} objective function and {the} constraint system. Thus, any resulting convex formulation will in general remain intractable. 
In the following, to reduce the clutter in our notation, we define the convex set
	\begin{equation*}
		\mathcal W(\mathcal B,K)=\left\{\left((\bm p_i)_{i\in[K]},(\bm Q_{ij})_{i,j\in[K]}\right):\begin{bmatrix}
			\bm Q_{11}&\cdots & \bm Q_{1K}&\bm p_1\\
			\vdots &\ddots&\vdots&\vdots\\
			\bm Q_{K1}&\cdots&\bm Q_{KK}&\bm p_K\\
			\bm p_{1}^\top&\cdots&\bm p_{K}^\top&1\\
		\end{bmatrix}\in\mathcal C\left(\mathcal B^K\times \RR_+\right)\right\},
	\end{equation*}
	{\noindent 	where $\bm p_i \in \mathcal B$ and $\bm Q_{ij} \in \RR^{(N+1+D) \times (N+1+D)}_+$, $i, j \in [K]$. Here, $\mathcal B$ is a given convex cone, $K$ is a positive integer, and $\mathcal B^K$ is the direct product of $K$ copies of $\mathcal B$. }
	\begin{thm}
		\label{thm:CPP}
		Problem \eqref{eq:NMF_} is equivalent to the following {generalized} completely positive program:
		\begin{equation}
			\label{eq:ONMF_CPP}
			\begin{array}{cll}
				\min&\displaystyle\tr(\bm X^\top\bm X)+\sum_{i\in[K]}\tr(-2\bm X\bm W_{ii}+\bm G_{ii})\\
				\st& \left((\bm p_i)_{i\in[K]},(\bm Q_{ij})_{i,j\in[K]}\right)\in	\mathcal W\left(\soc_+^{N+1}\times\RR_+^D,K\right)\\
				& \bm u_i\in\RR_+^N,\;\bm V_{ij}\in \RR_+^{N\times N},\;\bm h_i\in\RR_+^D,\;\bm G_{ij}\in\RR_+^{D\times D},\;\bm W_{ij}\in\RR_+^{N\times D}&\forall i,j\in[K]\\
				&\bm p_i=\begin{bmatrix}
					\bm u_i\\ 1\\ \bm h_i
				\end{bmatrix},\;\bm Q_{ij}=\begin{bmatrix}\bm V_{ij}&\bm u_i&\bm W_{ij}\\ \bm u_j^\top & 1 &\bm h_j^\top\\ \bm W_{ji}^\top& \bm h_i&\bm G_{ij}\end{bmatrix}&\forall i,j\in[K]\\ 
				&\tr(\bm V_{ii})=1 &\forall i\in[K]\\ 
				&\tr(\bm V_{ij})=0 &\forall i,j\in[K]:i\neq j.
			\end{array}
		\end{equation}			
	\end{thm}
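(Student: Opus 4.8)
\textbf{Proof proposal.} The plan is to recast problem \eqref{eq:NMF_} as an instance of the QCQP \eqref{eq:QCQP}, verify the hypotheses of Theorem~\ref{thm:main}, apply it to convexify the feasible set, and finally unpack the resulting set $\mathcal R$ of \eqref{eq:convexified_set} into the block form displayed in \eqref{eq:ONMF_CPP}. I would introduce for each $i\in[K]$ an auxiliary scalar $s_i$ and take as decision vector $\bm p=(\bm p_1,\dots,\bm p_K)$ with $\bm p_i=(\bm u_i,s_i,\bm h_i)\in\RR^{N+1+D}$, ranging over the closed convex cone $\mathcal K=(\soc_+^{N+1}\times\RR_+^D)^K$; the linear system is $s_i=1$ for all $i\in[K]$; the quadratic constraints are $\bm u_i^\top\bm u_i=1$, $i\in[K]$, and $\bm u_i^\top\bm u_j=0$, $i\neq j$; and the objective comes from the expansion $\tr(\bm X^\top\bm X-2\bm X\bm U\bm H^\top+\bm H^\top\bm H)=\tr(\bm X^\top\bm X)+\sum_{i\in[K]}\bigl(-2\,\bm h_i^\top\bm X\bm u_i+\|\bm h_i\|^2\bigr)$. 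A feasible point of \eqref{eq:NMF_} has $\|\bm u_i\|=1=s_i$, so $(\bm u_i,s_i)\in\soc_+^{N+1}$ and $\bm h_i\in\RR_+^D$; conversely a feasible $\bm p$ of the QCQP satisfies $\|\bm u_i\|\le s_i=1$ together with $\|\bm u_i\|^2=1$, hence $\|\bm u_i\|=1$, which with $\bm u_i\ge\bm 0$ and $\bm u_i^\top\bm u_j=0$ gives $\bm U^\top\bm U=\mathbb I$. Thus the QCQP is equivalent to \eqref{eq:NMF_} with identical optimal value.

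Next I would verify the assumptions of Theorem~\ref{thm:main} with a single stage, $M=1$ and $\mathcal T_1=[J]$. Over $\mathcal F_0=\{\bm p\in\mathcal K:\bm A\bm p=\bm b\}$ the membership $(\bm u_i,s_i)\in\soc^{N+1}$ and $s_i=1$ force $\|\bm u_i\|^2\le1$, so $\phi_j=1=\max_{\bm p\in\mathcal F_0}\bm u_i^\top\bm u_i$ for the diagonal constraints, while $\bm u_i,\bm u_j\ge\bm 0$ on $\mathcal K$ gives $\bm u_i^\top\bm u_j\ge0$, so $\phi_j=0=\min_{\bm p\in\mathcal F_0}\bm u_i^\top\bm u_j$ for the off-diagonal constraints; this establishes \eqref{eq:phi_j}. (The second-order-cone factor in $\mathcal K$ is exactly what bounds $\|\bm u_i\|^2$ over $\mathcal F_0$ and thereby lets the equality $\|\bm u_i\|^2=1$ be recognized as a maximum; dropping it would break the argument.) For \eqref{eq:p_bar}, note that $\bm A\bm d=\bm 0$ forces the $s_i$-component of any $\bm d\in\mathcal F^\infty$ to vanish, and then $(\,\cdot\,,0)\in\soc^{N+1}$ forces the corresponding $\bm u_i$-block of $\bm d$ to vanish as well, so recession directions move only the $\bm h$-blocks; since every $\bm C_j$ is supported on the $\bm u$-blocks and every $\bm c_j=\bm 0$, we get $\bm C_j\bm d=\bm 0$ for all $\bm d\in\mathcal F^\infty$, whence \eqref{eq:p_bar} holds for any $\overline{\bm p}\in\mathcal F$. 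Finally $\mathcal F\neq\emptyset$ under the natural requirement $N\ge K$: take the columns of $\overline{\bm U}$ to be the normalized indicator vectors of any partition of $[N]$ into $K$ nonempty blocks and $\overline{\bm H}=\bm 0$.

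With the hypotheses in place, Theorem~\ref{thm:main} identifies $\cl\conv(\mathcal Q)$ with $\mathcal R$, and \eqref{eq:QCQP_equiv} then shows the QCQP value equals the minimum of the lifted objective $\tr(\bm C_0\bm Q)+2\bm c_0^\top\bm p$ over $\mathcal R$. The remaining step is a change of notation: writing $\bm p=(\bm p_i)_{i\in[K]}$ and the Gram matrix $\bm Q=(\bm Q_{ij})_{i,j\in[K]}$ blockwise along $(\bm u,s,\bm h)$ identifies $\bm V_{ij}$, $\bm W_{ij}$, $\bm G_{ij}$ with the $\bm u\bm u^\top$-, $\bm u\bm h^\top$-, $\bm h\bm h^\top$-portions of $\bm Q_{ij}$, and $\bm u_i,\bm h_i$ with the corresponding sub-vectors of $\bm p_i$; the system $\bm A\bm p=\bm b$ becomes ``the middle entry of each $\bm p_i$ is $1$'', $\diag(\bm A\bm Q\bm A^\top)=\bm b\circ\bm b$ becomes ``the middle entry of each $\bm Q_{ii}$ is $1$'' (and one may also fix the middle entry of every $\bm Q_{ij}$ to $1$, which holds automatically on $\cl\conv(\mathcal Q)$ and so leaves $\mathcal R$ unchanged), $\tr(\bm C_j\bm Q)+2\bm c_j^\top\bm p=\phi_j$ becomes $\tr(\bm V_{ii})=1$ and $\tr(\bm V_{ij})=0$, the objective becomes $\tr(\bm X^\top\bm X)+\sum_{i\in[K]}\tr(-2\bm X\bm W_{ii}+\bm G_{ii})$, and non-negativity of all the sub-blocks is inherited from membership in $\mathcal C(\mathcal B^K\times\RR_+)$. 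I expect the main obstacle to be precisely this last, bookkeeping-heavy step — keeping the block indices consistent (in particular the symmetry relation between $\bm W_{ij}$ and $\bm W_{ji}$) and double-checking that augmenting the cone with $\soc_+^{N+1}$ neither enlarges the QCQP feasible set (it is cut back by $\|\bm u_i\|^2=1$) nor disturbs any hypothesis of Theorem~\ref{thm:main}.
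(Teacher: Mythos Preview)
Your proposal is correct and follows essentially the same route as the paper: lift each $(\bm u_i,\bm h_i)$ to $\bm p_i=(\bm u_i,t_i,\bm h_i)\in\soc_+^{N+1}\times\RR_+^D$ with the linear constraint $t_i=1$, take $M=1$, verify \eqref{eq:phi_j} via $\max_{\mathcal F_0}\bm u_i^\top\bm u_i=1$ (from the second-order cone) and $\min_{\mathcal F_0}\bm u_i^\top\bm u_j=0$ (from non-negativity), verify \eqref{eq:p_bar} by observing that the recession directions have vanishing $\bm u$-blocks, and then unpack $\mathcal R$ blockwise. Your remark that fixing the middle entry of every $\bm Q_{ij}$ (not just $\bm Q_{ii}$) to $1$ is redundant on $\cl\conv(\mathcal Q)$ is a detail the paper leaves implicit.
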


	\begin{proof}
		By utilizing the notation for column vectors $\{\bm u_i\}_{i\in[K]}$ and $\{\bm h_i\}_{i\in[K]}$, we can reformulate  problem~\eqref{eq:NMF_} equivalently as the problem
		\begin{equation}
			\label{eq:NMF_1}
			\begin{array}{cll}
				\min&\displaystyle\tr(\bm X^\top\bm X)-2\sum_{i\in[K]}\tr(\bm X\bm u_i\bm h_i^\top)+\sum_{i\in[K]}\tr(\bm h_i\bm h_i^\top)\\
				\st& \bm h_i \in\RR_+^{D},\;\bm u_i \in\RR_+^{N}&\forall i\in[K]\\
				&\bm u_i^\top\bm u_i=1&\forall i\in[K]\\
				&\bm u_i^\top\bm u_j=0&\forall i,j\in[K]:i\neq j.
			\end{array}
		\end{equation}	
		We now employ Theorem \ref{thm:main} to show the equivalence of problems \eqref{eq:NMF_1} and \eqref{eq:ONMF_CPP}. 
		We first introduce  an auxiliary decision variable $\bm p=(\bm p_1,\ldots,\bm p_K)$ that satisfies
		\begin{equation*}
			\bm p_i=\begin{bmatrix}
				\bm u_i\\ t_i \\ \bm h_i
			\end{bmatrix}\in\soc_+^{N+1}\times\RR^{D}_+\qquad\forall i\in[K].
		\end{equation*}
		Let $M=1$ in Theorem \ref{thm:main} and set $\mathcal K=(\soc_+^{N+1}\times\RR^{D}_+)^K$. %
		We then define the structured feasible sets
		\begin{equation*}
			\mathcal F_0=\left\{\bm p\in\mathcal K:t_i=1\quad\forall i\in[K]\right\}\quad\textup{and}\quad \mathcal F_1=\mathcal  F=\left\{
			\bm p\in\mathcal F_0:\begin{array}{ll}
				\bm u_i^\top\bm u_i=1&\forall i\in[K]\\
				\bm u_i^\top\bm u_j=0&\forall i,j\in[K]:i\neq j
			\end{array}\right\}.
		\end{equation*}
		Note that for every $i\in[K]$, the constraints $\|\bm u_i\|_2\leq t_i$ and $t_i=1$ in $\mathcal F_0$ imply that the variables $\bm u_i$ and $t_i$ are bounded. Thus, the recession cone of $\mathcal F_0$ coincides with the set $
		\mathcal F^\infty=\left\{\bm p\in\mathcal K:\bm u_i=\bm 0,\;t_i=0\;\forall i\in[K]\right\}$. Next, we set the vector $\overline{\bm p}=(\overline{\bm p}_1,\ldots,\overline{\bm p}_K)\in\mathcal F$  in Theorem \ref{thm:main} to satisfy
		\begin{equation*}
			\overline{\bm p}_i=\begin{bmatrix}
				\overline{\bm u}_i\\ 1\\ \bm 0
			\end{bmatrix}\in \soc_+^{N+1}\times\RR^{D}_+\qquad\forall i\in[K],
		\end{equation*}
		where the subvectors $\{\overline{\bm u}_i\}_{i\in[K]}$ are chosen to be feasible in \eqref{eq:NMF_1}.
		In view of the description of recession cone~$\mathcal F^\infty$ and the structure of quadratic constraints in $\mathcal F$, one can readily verify that such a vector $\overline{\bm p}$ satisfies the condition \eqref{eq:p_bar} in Theorem \ref{thm:main}. 
		It remains to show that  condition \eqref{eq:phi_j} is also satisfied. 
		Indeed, we have
		\begin{equation*}
			\max_{\bm p\in\mathcal F_0}\left\{\bm u_i^\top\bm u_i\right\}=1\quad\forall i\in[K],
		\end{equation*}
		since the constraints $\|\bm u_i\|_2\leq 1$, $i\in[K]$, are implied by $\mathcal F_0$, while equalities are attained whenever the 2-norm of each vector $\bm u_i$ is 1. Similarly,  we find that
		\begin{equation*}
			\min_{\bm p\in\mathcal F_0}\left\{\bm u_i^\top\bm u_j\right\}=0\quad\forall i,j\in[K]:i\neq j,
		\end{equation*}
		since the constraints $\bm u_i\geq \bm 0$, $i\in[K]$, are implied by $\mathcal F_0$, while equalities are attained whenever the solutions $\bm u_i$ and $\bm u_j$ satisfy the complementarity property:
		\begin{equation*}
			u_{in}>0\implies u_{jn}=0\;\textup { and }\;			u_{jn}>0\implies u_{in}=0\qquad \forall n\in[N]. 
		\end{equation*}
		Thus, all conditions in Theorem \ref{thm:main} are satisfied. 
		
		Next, we introduce new matrix variables that represent a linearization of the quadratic variables, as follows:
		\begin{align}
			\label{matvar}
			\bm V_{ij} = \bm u_i \bm u_j^\top, \bm W_{ij} = \bm u_i \bm h_j^\top,\; \text{ and } \;\bm G_{ij} = \bm h_i \bm h_j^\top\qquad \forall i, j \in [K].		
		\end{align}
		We also define an auxiliary decision variable $\bm Q=(\bm Q_{ij})_{i,j\in[K]}$ satisfying
		\begin{align*}
			\bm Q_{ij} = \bm p_i \bm p_j^\top = \begin{bmatrix}\bm V_{ij}&\bm u_i&\bm W_{ij}\\ \bm u_j^\top & 1 &\bm h_j^\top\\ \bm W_{ji}^\top& \bm h_i&\bm G_{ij}\end{bmatrix}\qquad\forall i,j\in[K]. 
		\end{align*}
		Using these new terms, we construct the set $\mathcal R$ in Theorem \ref{thm:main} as follows:
		\begin{equation*}
			\mathcal R=\left\{\begin{bmatrix}
				\bm Q_{11}&\cdots & \bm Q_{1K}&\bm p_1\\
				\vdots &\ddots&\vdots&\vdots\\
				\bm Q_{K1}&\cdots&\bm Q_{KK}&\bm p_K\\
				\bm p_{1}^\top&\cdots&\bm p_{K}^\top&1\\
			\end{bmatrix} \in\mathcal{C}(\mathcal K \times \RR_+):
			\begin{array}{ll}
				\bm p_i=\begin{bmatrix}
					\bm u_i\\ 1 \\ \bm h_i
				\end{bmatrix} ,\;\bm Q_{ij} =\begin{bmatrix}\bm V_{ij}&\bm u_i&\bm W_{ij}\\ \bm u_j^\top & 1 &\bm h_j^\top\\ \bm W_{ji}^\top& \bm h_i&\bm G_{ij}\end{bmatrix}
				&\forall i,j\in[K]\\
				\tr(\bm V_{ii}) = 1& \forall i \in [K] \\ 
				\tr(\bm V_{ij}) = 0 & \forall i, j \in [K]: i \neq j	
			\end{array}\right\}. 
		\end{equation*}
		By Theorem \ref{thm:main}, this set coincides with $\cl\conv  \left(\mathcal Q\right)$, where the set $\mathcal Q$ is defined as in \eqref{eq:set_Q}. 
		Thus, by linearizing the objective function using the matrix variables in \eqref{matvar}, we  find that  the generalized completely positive program \eqref{eq:ONMF_CPP} is indeed equivalent to \eqref{eq:NMF_}.		This completes the proof. 
	\end{proof}
	Let us now consider a special case of problem \eqref{eq:NMF}; if all components of $\bm X$ are non-negative, then we can reduce the problem into a simpler one involving only the decision matrix $\bm U$.
	\begin{lem}
		If $\bm X$ is a non-negative matrix then problem \eqref{eq:NMF} is equivalent to the non-convex program
		\begin{equation}
			\label{eq:NMF1}
			\begin{array}{cl}
				\min&\tr(\bm X^\top\bm X-\bm X^\top\bm X\bm U\bm U^\top)\\
				\st& \bm U\in\RR_+^{N\times K}\\
				&\bm U^\top\bm U=\mathbb I. 
			\end{array}
		\end{equation}	
	\end{lem}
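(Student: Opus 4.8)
The plan is to reduce \eqref{eq:NMF} to \eqref{eq:NMF1} by partially minimizing over the factor $\bm H$. Recall from the discussion preceding \eqref{eq:NMF_} that \eqref{eq:NMF} is equivalent to \eqref{eq:NMF_}, so it suffices to work with the latter. First I would fix any $\bm U\in\RR_+^{N\times K}$ with $\bm U^\top\bm U=\mathbb I$ and regard the objective $\tr(\bm X^\top\bm X-2\bm X\bm U\bm H^\top+\bm H^\top\bm H)$ as a function of $\bm H$ alone. Completing the square, $\tr(\bm H^\top\bm H)-2\tr(\bm X\bm U\bm H^\top)=\|\bm H-\bm X\bm U\|_F^2-\|\bm X\bm U\|_F^2$, so this function is strictly convex in $\bm H$ with unique unconstrained minimizer $\bm H^\star=\bm X\bm U$.

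The key observation is that this minimizer is automatically feasible for the constraint $\bm H\in\RR_+^{D\times K}$: since $\bm X$ and $\bm U$ are entrywise non-negative, so is the product $\bm X\bm U$. Hence for every feasible $\bm U$ the inner minimization over $\bm H\geq\bm 0$ is attained at $\bm H^\star=\bm X\bm U$, and substituting this back yields the reduced objective
\[
\tr\bigl(\bm X^\top\bm X-2\bm X\bm U\bm U^\top\bm X^\top+\bm U^\top\bm X^\top\bm X\bm U\bigr)=\tr\bigl(\bm X^\top\bm X-\bm X^\top\bm X\bm U\bm U^\top\bigr),
\]
where the last equality uses the cyclic invariance of the trace. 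Thus \eqref{eq:NMF_} has the same optimal value as \eqref{eq:NMF1}, and conversely any $\bm U$ feasible (resp.\ optimal) in \eqref{eq:NMF1} yields the pair $(\bm U,\bm X\bm U)$ feasible (resp.\ optimal) in \eqref{eq:NMF_} with matching objective value; this establishes the claimed equivalence in both directions.

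There is essentially no obstacle here beyond bookkeeping; the one place the hypothesis genuinely enters is the non-negativity of $\bm X\bm U$, which is exactly why the assumption that $\bm X$ is a non-negative matrix is required — without it the constrained minimizer of the inner problem need not coincide with $\bm X\bm U$ and the clean closed-form reduction breaks down. I would also note in passing that the orthogonality constraint $\bm U^\top\bm U=\mathbb I$ plays no role in the inner minimization step itself; it is simply carried along unchanged, and it is what validates the identity $\|\bm X-\bm H\bm U^\top\|_F^2=\tr(\bm X^\top\bm X-2\bm X\bm U\bm H^\top+\bm H^\top\bm H)$ used to pass from \eqref{eq:NMF} to \eqref{eq:NMF_} in the first place.
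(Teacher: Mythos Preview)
Your proposal is correct and follows essentially the same route as the paper: fix $\bm U$, minimize the objective of \eqref{eq:NMF_} over $\bm H\in\RR_+^{D\times K}$ analytically to obtain $\bm H^\star=\bm X\bm U$ (feasible precisely because $\bm X\geq\bm 0$), and substitute back. Your write-up simply makes explicit the completing-the-square step and the role of the non-negativity hypothesis that the paper leaves implicit.
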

	\begin{proof}
		Solving the minimization over ${\bm H}\in\RR_+^{D\times K}$ analytically in \eqref{eq:NMF_}, we find that the solution ${\bm H}=\bm X\bm U$ is feasible and  optimal. Substituting this solution into the objective function of \eqref{eq:NMF_}, we arrive at the equivalent problem~\eqref{eq:NMF1}. This completes the proof. 
	\end{proof}
	\noindent By employing the same reformulation techniques as in the proof of Theorem \ref{thm:CPP}, we can show that  problem~\eqref{eq:NMF1} is amenable to an exact convex reformulation. 
	\begin{prop}
		\label{thm:CPP_1}
		Problem \eqref{eq:NMF1} is equivalent to the following {generalized} completely positive program:
		\begin{equation}
			\label{eq:NMF_CPP}
			\begin{array}{cll}
				\min&\displaystyle\tr(\bm X^\top\bm X)-\sum_{i\in[K]}\tr(\bm X^\top\bm X\bm V_{ii})\\
				\st& \left((\bm p_i)_{i\in[K]},(\bm Q_{ij})_{i,j\in[K]}\right)\in	\mathcal W\left(\soc_+^{N+1},K\right),\;
				\bm u_i\in\RR_+^N
				\\
				&\bm p_i=\begin{bmatrix}
					\bm u_i\\ 1
				\end{bmatrix},\;\bm Q_{ij}=\begin{bmatrix}\bm V_{ij}&\bm u_i\\ \bm u_j^\top & 1 \end{bmatrix}&\forall i,j\in[K]\\ 
				&\tr(\bm V_{ii})=1 &\forall i\in[K]\\ 
				&\tr(\bm V_{ij})=0 &\forall i,j\in[K]:i\neq j.
			\end{array}
		\end{equation}	
	\end{prop}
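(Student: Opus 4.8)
The plan is to mirror the proof of Theorem~\ref{thm:CPP}, exploiting the fact that problem~\eqref{eq:NMF1} is a QCQP in the columns $\{\bm u_i\}_{i\in[K]}$ of $\bm U$ alone. First I would rewrite \eqref{eq:NMF1} column-wise: since $\bm U\bm U^\top=\sum_{i\in[K]}\bm u_i\bm u_i^\top$, the objective equals $\tr(\bm X^\top\bm X)-\sum_{i\in[K]}\tr(\bm X^\top\bm X\bm u_i\bm u_i^\top)$, while the constraint $\bm U^\top\bm U=\mathbb I$ decomposes into the quadratic equalities $\bm u_i^\top\bm u_i=1$ for all $i\in[K]$ and $\bm u_i^\top\bm u_j=0$ for all $i\neq j$. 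Introducing lifted variables $\bm p_i=[\bm u_i;\,t_i]$ restricted to $\soc_+^{N+1}$ together with the linear equality $t_i=1$, this becomes an instance of the QCQP~\eqref{eq:QCQP} with cone $\mathcal K=(\soc_+^{N+1})^K$.

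Next I would apply Theorem~\ref{thm:main} with $M=1$ and $\mathcal T_1=[J]$, taking $\mathcal F_0=\{\bm p\in\mathcal K:t_i=1\ \forall i\in[K]\}$ and letting $\mathcal F_1=\mathcal F$ be the set obtained by adjoining the quadratic equalities above. The conditions are then verified exactly as in the proof of Theorem~\ref{thm:CPP}, now without the $\bm h_i$ block. The constraints $\|\bm u_i\|\leq t_i$ and $t_i=1$ make $\mathcal F_0$ bounded in the $(\bm u_i,t_i)$ coordinates, so $\mathcal F^\infty=\{\bm p\in\mathcal K:\bm u_i=\bm 0,\;t_i=0\ \forall i\in[K]\}$, and choosing $\overline{\bm p}$ with $\overline{\bm p}_i=[\overline{\bm u}_i;\,1]$ for any feasible point $\{\overline{\bm u}_i\}$ of \eqref{eq:NMF1} (which exists, e.g.\ a cluster-assignment matrix with $K$ nonempty clusters) makes condition~\eqref{eq:p_bar} hold trivially. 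Moreover $\max_{\bm p\in\mathcal F_0}\bm u_i^\top\bm u_i=1$, attained when $\|\bm u_i\|=1$, and $\min_{\bm p\in\mathcal F_0}\bm u_i^\top\bm u_j=0$, attained whenever $\bm u_i$ and $\bm u_j$ have complementary supports, so condition~\eqref{eq:phi_j} holds. Hence $\cl\conv(\mathcal Q)$ coincides with the set $\mathcal R$ of~\eqref{eq:convexified_set}.

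Finally I would linearize. Setting $\bm V_{ij}=\bm u_i\bm u_j^\top$ and $\bm Q_{ij}=\bm p_i\bm p_j^\top=\begin{bmatrix}\bm V_{ij}&\bm u_i\\ \bm u_j^\top&1\end{bmatrix}$, the linear constraints $\bm A\bm p=\bm b$ and $\diag(\bm A\bm Q\bm A^\top)=\bm b\circ\bm b$ reproduce $t_i=1$ and the corresponding diagonal entries of $\bm Q_{ii}$, the equalities $\tr(\bm C_j\bm Q)+2\bm c_j^\top\bm p=\phi_j$ become $\tr(\bm V_{ii})=1$ and $\tr(\bm V_{ij})=0$ for $i\neq j$, and the objective becomes $\tr(\bm X^\top\bm X)-\sum_{i\in[K]}\tr(\bm X^\top\bm X\bm V_{ii})$. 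The membership $\left((\bm p_i)_{i\in[K]},(\bm Q_{ij})_{i,j\in[K]}\right)\in\mathcal W(\soc_+^{N+1},K)$ is precisely the statement that the full lifted matrix lies in $\mathcal C(\mathcal K\times\RR_+)$, so the resulting optimization problem is exactly~\eqref{eq:NMF_CPP}, and it shares optimal value and optimal solutions with~\eqref{eq:NMF1}.

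I do not expect a serious obstacle, since the substantive work is carried by Theorem~\ref{thm:main} and the argument is essentially the $\bm H$-free specialization of the proof of Theorem~\ref{thm:CPP}. The only points needing a little care are confirming that the block decomposition of $\bm Q_{ij}$ is consistent with a completely positive factorization over $(\soc_+^{N+1})^K\times\RR_+$ (so that the extracted blocks $\bm V_{ij}$ are entrywise nonnegative, as required by the definition of $\mathcal W$), and keeping track of which $\phi_j$ arise as minima versus maxima over $\mathcal F_0$ when invoking~\eqref{eq:phi_j}.
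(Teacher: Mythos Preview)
Your proposal is correct and follows precisely the approach the paper indicates: it specializes the proof of Theorem~\ref{thm:CPP} by dropping the $\bm h_i$ block, verifying the hypotheses of Theorem~\ref{thm:main} for the cone $\mathcal K=(\soc_+^{N+1})^K$ in the same way. The only simplification worth noting is that here $\mathcal F^\infty=\{\bm 0\}$ (there is no unbounded $\bm h_i$ direction), so condition~\eqref{eq:p_bar} is vacuous rather than merely easy, and your caution about entrywise nonnegativity of the $\bm V_{ij}$ blocks is automatically handled because $\soc_+^{N+1}\subset\RR_+^{N+1}$.
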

	\section{$K$-means Clustering}
	\label{sec:kmeans}
	Building upon the results {from the} previous sections, we now derive an exact generalized completely positive programming reformulation for the $K$-means clustering problem  \eqref{eq:kmeans}.  
	To this end, we note that the  problem  can equivalently be solved via the following mixed-integer nonlinear program \cite{hansen1997cluster}:
	\begin{equation}
		\label{eq:kmeans_0}
		\begin{array}{ccll}
			Z^\star=&\displaystyle\min&\displaystyle\sum_{i\in[K]}\sum_{n:\pi_{in}=1}\|\bm x_n-\bm c_i\|^2\\
			&\displaystyle\st&\displaystyle\bm \pi_i\in\{0,1\}^N,\;\bm c_i\in\RR^D&\forall i\in[K]\\
			&&\displaystyle\bm c_i=\frac{1}{\mathbf e^\top\bm\pi_i}\sum_{n:\pi_{in}=1}\bm x_n&\forall i\in[K]\\
			&&\displaystyle\mathbf e^\top\bm\pi_i\geq 1&\forall i\in[K]\\
			&&\displaystyle\sum_{i\in[K]} \bm{\pi}_i= \mathbf e.
		\end{array}
	\end{equation}		
	Here, $\bm c_i$ is the centroid of the $i$-th cluster, while $\bm \pi_i$ is the assignment vector for the $i$-th cluster, \ie, $\pi_{in}=1$ if and only if  the data point $\bm x_{n}$ is assigned to the cluster $i$. The last constraint  in \eqref{eq:kmeans_0} ensures that each data point is assigned to a cluster, while the  constraint system in the penultimate row ensures that there are exactly $K$ clusters. We now show that we can solve the  $K$-means clustering problem by solving a modified problem~\eqref{eq:NMF1} with an additional constraint $\sum_{i\in[K]}\bm u_i\bm u_i^\top\mathbf e=\mathbf e$. {To further simplify our notation we will employ the sets 
		\begin{equation*}
			\mathcal U(N,K)=\left\{\bm U\in\RR_+^{N\times K}:\begin{array}{cllll}
				\bm u_i^\top\bm u_i=1&\forall i\in[K],&
				\bm u_i^\top\bm u_j=0&\forall i,j\in[K]:i\neq j
			\end{array}\right\}\;\text{ and }
		\end{equation*}
		\begin{equation*}
			\mathcal V(N,K)=\left\{(\bm V_{ij})_{i,j\in[K]}\in\RR_+^{N^2\times K^2}:\begin{array}{cllll}
				\tr(\bm V_{ii})=1 &\forall i\in[K],&
				\tr(\bm V_{ij})=0 &\forall i,j\in[K]:i\neq j
			\end{array}\right\}
		\end{equation*}
		in all reformulations in the remainder of this section. }
	\begin{thm}
		\label{thm:kmeans_1}
		The following non-convex program solves the $K$-means clustering problem:
		\begin{equation}
			\tag{$\mathcal Z$}
			\label{eq:kmeans_1}
			\begin{array}{ccll}
				Z^\star=&\displaystyle\min&\displaystyle\tr(\bm X^\top\bm X)-\sum_{i\in[K]}\tr(\bm X^\top\bm X\bm u_i\bm u_i^\top)\\
				&\st& {\bm U\in\mathcal{U}(N, K)}\\
				&&\displaystyle\sum_{i\in[K]}\bm u_i\bm u_i^\top\mathbf e=\mathbf e.
			\end{array}
		\end{equation}		
	\end{thm}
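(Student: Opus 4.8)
The plan is to show that the non-convex program \eqref{eq:kmeans_1} has the same optimal value as the mixed-integer nonlinear program \eqref{eq:kmeans_0} (which is already known to solve \eqref{eq:kmeans}), by constructing an objective-preserving correspondence between their feasible points in both directions. The bridge is the substitution $\bm u_i=\bm\pi_i/\sqrt{\mathbf e^\top\bm\pi_i}$ relating a cluster assignment vector $\bm\pi_i\in\{0,1\}^N$ to a unit vector $\bm u_i$.

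First I would verify the ``$\le$'' direction. Given a feasible point $(\bm\pi_i,\bm c_i)_{i\in[K]}$ of \eqref{eq:kmeans_0}, the substitution is well defined since $\mathbf e^\top\bm\pi_i\ge 1$; using $\bm\pi_i^\top\bm\pi_i=\mathbf e^\top\bm\pi_i$ (binarity), the pairwise disjointness of the supports of the $\bm\pi_i$ (forced by $\sum_i\bm\pi_i=\mathbf e$), and $\bm u_i\ge\bm 0$, one checks that $\bm U\in\mathcal U(N,K)$. Since $\bm u_i^\top\mathbf e=\sqrt{\mathbf e^\top\bm\pi_i}$, it follows that $\bm u_i\bm u_i^\top\mathbf e=(\bm u_i^\top\mathbf e)\,\bm u_i=\bm\pi_i$, so summing over $i$ gives the extra constraint $\sum_{i\in[K]}\bm u_i\bm u_i^\top\mathbf e=\mathbf e$. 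The objective match then follows from the textbook centroid decomposition $\sum_{n:\pi_{in}=1}\|\bm x_n-\bm c_i\|^2=\sum_{n:\pi_{in}=1}\|\bm x_n\|^2-(\mathbf e^\top\bm\pi_i)\|\bm c_i\|^2$ together with $(\mathbf e^\top\bm\pi_i)\|\bm c_i\|^2=\bm\pi_i^\top\bm X^\top\bm X\bm\pi_i/(\mathbf e^\top\bm\pi_i)=\bm u_i^\top\bm X^\top\bm X\bm u_i=\tr(\bm X^\top\bm X\bm u_i\bm u_i^\top)$ and $\sum_i\sum_{n:\pi_{in}=1}\|\bm x_n\|^2=\tr(\bm X^\top\bm X)$.

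For the ``$\ge$'' direction I would start from an arbitrary feasible $\bm U$ of \eqref{eq:kmeans_1} and recover a clustering. Let $S_i=\{n\in[N]:u_{in}>0\}$; non-negativity of the $\bm u_i$ together with $\bm u_i^\top\bm u_j=0$ makes the $S_i$ pairwise disjoint, and $\bm u_i^\top\bm u_i=1$ makes each $S_i$ non-empty. The heart of the argument is to exploit the extra constraint coordinatewise: setting $\alpha_i=\bm u_i^\top\mathbf e\ge 0$ it reads $\sum_{i\in[K]}\alpha_i\bm u_i=\mathbf e$, so every index $n$ lies in exactly one $S_i$ (hence the $S_i$ partition $[N]$) and the surviving term obeys $\alpha_i u_{in}=1$, i.e.\ $\bm u_i$ is \emph{constant} on $S_i$. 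Feeding this back into $\bm u_i^\top\bm u_i=1$ gives $\alpha_i=\sqrt{|S_i|}$ and $u_{in}=1/\sqrt{|S_i|}$ for $n\in S_i$, so $\bm u_i=\bm\pi_i/\sqrt{\mathbf e^\top\bm\pi_i}$ with $\bm\pi_i$ the indicator of $S_i$; taking $\bm c_i$ to be the induced centroids gives a feasible point of \eqref{eq:kmeans_0} whose objective equals that of \eqref{eq:kmeans_1} at $\bm U$, again by the same decomposition.

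I expect the one genuinely delicate step to be this last extraction: arguing that the single vector identity $\sum_i\bm u_i\bm u_i^\top\mathbf e=\mathbf e$ simultaneously forces the supports to cover $[N]$, forces each $\bm u_i$ to be flat on its support, and is consistent with $\|\bm u_i\|_2=1$ only under the normalization $1/\sqrt{|S_i|}$. Everything else (the $\mathcal U(N,K)$ membership check and the objective identity) is routine, so once the coordinatewise bookkeeping is in place the equivalence of optimal values, including the fact that an optimal $\bm U$ yields an optimal clustering, follows immediately.
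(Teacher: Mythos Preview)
Your proposal is correct and follows essentially the same route as the paper: both directions use the substitution $\bm u_i=\bm\pi_i/\sqrt{\mathbf e^\top\bm\pi_i}$, and in the reverse direction the paper likewise argues that disjointness of supports together with $\sum_{i}\bm u_i\bm u_i^\top\mathbf e=\mathbf e$ forces $\bm u_i\in\{0,1/(\bm u_i^\top\mathbf e)\}^N$, then sets $\bm\pi_i=\bm u_i\bm u_i^\top\mathbf e$. The only cosmetic difference is that the paper first eliminates the centroids to rewrite \eqref{eq:kmeans_0} in the form $\tr(\bm X^\top\bm X)-\sum_i(\mathbf e^\top\bm\pi_i)^{-1}\tr(\bm X^\top\bm X\bm\pi_i\bm\pi_i^\top)$ and then compares, whereas you carry the centroid decomposition through directly; the computations are identical.
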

	\begin{proof}
		We first observe that the  centroids in \eqref{eq:kmeans_0} can be expressed as 
		\begin{equation*}
			\bm c_i=\frac{1}{\mathbf e^\top\bm\pi_i}\sum_{n\in[N]}\pi_{in}\bm x_n\qquad\forall i\in[K].
		\end{equation*}
		Substituting these terms into the objective function and expanding the squared norm yield
		\begin{equation*}
			\begin{array}{cl}
				\displaystyle\sum_{i\in[K]}\sum_{n:\pi_{in}=1}\|\bm x_n-\bm c_i\|^2&\displaystyle=\sum_{i\in[K]}\sum_{n\in[N]}\pi_{in}\|\bm x_n-\bm c_i\|^2\\
				&\displaystyle=\left(\sum_{n\in[N]}\|\bm x_{n}\|^2\right)-\left(\sum_{i\in[K]}\frac{1}{\mathbf e^\top\bm \pi_i}\sum_{p,q\in[N]}\pi_{ip}\pi_{iq}\bm x_{p}^\top \bm x_{q}\right)\\[5mm]
				&\displaystyle=\tr(\bm X^\top\bm X)-\sum_{i\in[K]}\frac{1}{\mathbf e^\top\bm \pi_i}\tr(\bm X^\top\bm X\bm \pi_i\bm \pi_i^\top). 
			\end{array}
		\end{equation*}
		Thus, \eqref{eq:kmeans_0} can be {rewritten as}  
		\begin{equation}
			\label{eq:kmeans_2}
			\begin{array}{cll}
				\displaystyle\min&\displaystyle\tr(\bm X^\top\bm X)-\sum_{i\in[K]}\frac{1}{\mathbf e^\top\bm \pi_i}\tr(\bm X^\top\bm X\bm \pi_i\bm \pi_i^\top)\\
				\displaystyle\st&\displaystyle\bm \pi_i\in\{0,1\}^N&\forall i\in[K]\\
				&\displaystyle\mathbf e^\top\bm\pi_i\geq 1&\forall i\in[K]\\
				&\displaystyle\sum_{i\in[K]} \bm{\pi}_i= \mathbf e. 
			\end{array}
		\end{equation}		
		For any feasible solution $(\bm \pi_i)_{i\in[K]}$ to \eqref{eq:kmeans_2} we define the vectors $(\bm u_i)_{i\in[K]}$ that satisfy 
		\begin{equation*}
			\bm u_i=\frac{\bm \pi_i}{\sqrt{\mathbf e^\top\bm\pi_i}}\quad\forall i\in[K]. 
		\end{equation*}
		We argue that the solution  $(\bm u_i)_{i\in[K]}$ is feasible to \ref{eq:kmeans_1} and yields the same objective value. Indeed, we  have
		\begin{equation*}
			\bm u_i^\top\bm u_i=\frac{\bm \pi_i^\top\bm{\pi}_i}{{\mathbf e^\top\bm\pi_i}}=1\quad\forall i\in[K]
		\end{equation*}
		because  $\bm \pi_i\in\{0,1\}^N$ and $\mathbf e^\top\bm\pi_i\geq 1$ for all $i\in[K]$. We also have 
		\begin{equation*}
			\sum_{i\in[K]}\bm u_i\bm u_i^\top\mathbf e=\sum_{i\in[K]}\frac{\bm \pi_i}{\sqrt{\mathbf e^\top\bm\pi_i}}\frac{\mathbf e^\top\bm \pi_i}{\sqrt{\mathbf e^\top\bm\pi_i}}=\mathbf e,
		\end{equation*}
		and
		\begin{equation*}
			\bm u_i^\top\bm u_j=0\quad\forall i,j\in[K]:i\neq j
		\end{equation*}
		since the constraint $\sum_{i\in[K]} \bm{\pi}_i= \mathbf e$ in \eqref{eq:kmeans_2} ensures that each data point is assigned to at most $1$ cluster.  Verifying the objective value of this solution, we obtain 
		\begin{equation*}
			\begin{array}{ccll}
				\displaystyle\tr(\bm X^\top\bm X)-\sum_{i\in[K]}\tr(\bm X^\top\bm X\bm u_i\bm u_i^\top)=\displaystyle \tr(\bm X^\top\bm X)-\sum_{i\in[K]}\frac{1}{\mathbf e^\top\bm \pi_i}\tr(\bm X^\top\bm X\bm \pi_i\bm \pi_i^\top).
			\end{array}
		\end{equation*}		
		Thus, we conclude that problem \ref{eq:kmeans_1} constitutes a relaxation of \eqref{eq:kmeans_2}. 
		
		To show that \ref{eq:kmeans_1} is indeed an exact reformulation, consider  any feasible solution $(\bm u_i)_{i\in[K]}$ to this problem. For any fixed $i,j\in[K]$, the complementary constraint $\bm u_i^\top\bm u_j=0$ in \ref{eq:kmeans_1} means that
		\begin{equation*}
			u_{in}>0\Longrightarrow u_{jn}=0 \quad \text{and} \quad u_{jn}>0\Longrightarrow u_{in}=0 \quad\text{for all}\quad n\in[N].
		\end{equation*}
		Thus, in view of the last constraint in \ref{eq:kmeans_1}, we must have $\bm u_i\in\{0,{1}/{\bm u_i^\top\mathbf e}\}^N$ for every~$i\in[K]$. 
		Using this observation, we define the binary vectors $(\bm \pi_i)_{i\in[K]}$ that satisfy
		\begin{equation*}
			\label{eq:solution_pi}
			\bm \pi_i=\bm u_i\bm u_i^\top\mathbf e\in\{0,1\}^N\qquad\forall i\in[K]. 
		\end{equation*}
		For every $i\in[K]$, we find that $\mathbf e^\top\bm{\pi}_i\geq 1$ since $\bm u_i^\top\bm u_i=1$. Furthermore, we have
		\begin{equation*}
			\sum_{i\in[K]} \bm{\pi}_i= \sum_{i\in[K]} \bm{u}_i\bm u_i^\top\mathbf e=\mathbf e. 
		\end{equation*}
		Substituting the constructed solution $(\bm \pi_i)_{i\in[K]}$  into the objective function of \eqref{eq:kmeans_2}, we obtain
		\begin{equation*}
			\begin{array}{rl}
				\displaystyle\tr(\bm X^\top\bm X)-\sum_{i\in[K]}\frac{1}{\mathbf e^\top\bm \pi_i}\tr(\bm X^\top\bm X\bm \pi_i\bm \pi_i^\top)&=\displaystyle\tr(\bm X^\top\bm X)-\sum_{i\in[K]}\frac{
					(\bm u_i^\top\mathbf e)^2}{\mathbf e^\top\bm u_i\bm u_i^\top\mathbf e}\tr(\bm X^\top\bm X\bm u_i\bm u_i^\top)\\
				&=\displaystyle\tr(\bm X^\top\bm X)-\sum_{i\in[K]}\tr(\bm X^\top\bm X\bm u_i\bm u_i^\top).
			\end{array}
		\end{equation*}
		Thus, any feasible solution to \ref{eq:kmeans_1} can be used to construct a feasible solution to \eqref{eq:kmeans_2} that yields the same objective value. Our previous argument that \eqref{eq:kmeans_2} is a relaxation of~\ref{eq:kmeans_1} then implies that both problems are indeed equivalent. This completes the proof. 
	\end{proof}
	\begin{rem}
		The  constraint $\sum_{i\in[K]}\bm u_i\bm u_i^\top\mathbf e=\mathbf e$ in \ref{eq:kmeans_1}   ensures that there are no fractional values in the resulting cluster assignment vectors $(\bm{\pi}_i)_{i\in[K]}$. While the formulation \eqref{eq:NMF1} is only applicable for instances of ONMF problem with non-negative input data $\bm X$, the reformulation \ref{eq:kmeans_1} remains valid for any instances of $K$-means clustering problem, even if the input data matrix $\bm X$ contains negative components. 
	\end{rem}
	
	\begin{rem}
		In \cite[Section 2]{ding2005equivalence} and  \cite[Theorem 1]{li2006relationships}, it was claimed that the ONMF problem \eqref{eq:NMF1} is equivalent to {the} $K$-means clustering problem \eqref{eq:kmeans}.  Theorem \ref{thm:kmeans_1} above amends this result by showing that both problems become equivalent if and only if the constraint $\sum_{i\in[K]}\bm u_i\bm u_i^\top\mathbf e=\mathbf e$  is added to \eqref{eq:NMF1}. 
	\end{rem}
	
	\begin{rem}
		We can reformulate the objective function of \ref{eq:kmeans_1} as 
		$\frac{1}{2}\tr\left(\bm D\sum_{i\in[K]}\bm u_i\bm u_i^\top\right)$,
		where $\bm D$ is the matrix with components $D_{pq}=\|\bm x_p-\bm x_q\|^2$, $p,q\in[N]$. To obtain this reformulation, define $\bm Y=\sum_{i\in[K]}\bm u_i\bm u_i^\top$. Then we have
		\begin{equation*}
			\begin{array}{cll}
				\displaystyle\frac{1}{2}\tr(\bm D\bm Y)&\displaystyle=\frac{1}{2}\sum_{p,q\in[N]}\|\bm x_p-\bm x_q\|^2Y_{pq}\\
				&\displaystyle=\frac{1}{2}\sum_{p,q\in[N]}\left(\bm x_p^\top\bm x_p+\bm x_q^\top\bm x_q-2\bm x_p^\top\bm x_q\right)Y_{pq}\\
				&\displaystyle=\frac{1}{2}\left(2\sum_{p\in[N]}\sum_{q\in[N]}\bm x_p^\top\bm x_pY_{pq}\right)-\sum_{p,q\in[N]}\bm x_p^\top\bm x_qY_{pq}\\
				&\displaystyle =\left(\sum_{p\in[N]}\bm x_p^\top\bm x_p\right)-\left(\sum_{p,q\in[N]}\bm x_p^\top\bm x_qY_{pq}\right)=\displaystyle\tr(\bm X^\top\bm X)-\tr(\bm X^\top\bm X\bm Y). 
			\end{array}
		\end{equation*}
		Here, the fourth equality holds because of the last constraint in \ref{eq:kmeans_1} which ensures that $\sum_{q\in[N]}Y_{pq}=1$ for all~$p\in[N]$. 
	\end{rem}
	We are now well-positioned to derive an equivalent generalized completely positive program for the $K$-means clustering problem. 
	%
	\begin{thm}
		The following {generalized} completely positive program solves the $K$-means clustering problem:
		\begin{equation}
			\tag{$\overline{\mathcal Z}$}
			\label{eq:kmeans_CPP}
			\begin{array}{ccll}
				Z^\star=&\min&\displaystyle\tr(\bm X^\top\bm X)-\sum_{i\in[K]}\tr(\bm X^\top\bm X\bm V_{ii})\\
				&\st& \left((\bm p_i)_{i\in[K]},(\bm Q_{ij})_{i,j\in[K]}\right)\in	\mathcal W\left(\soc_+^{N+1}\times \RR_+^{N+1},K\right), \; 
				(\bm V_{ij})_{i,j\in[K]}\in\mathcal V(N,K)\\
				&& \bm w\in\RR_+^K,\; z_{ij} \in \RR_+\;,\bm u_i,\bm s_i,\bm h_{ij},\bm r_{ij}\in\RR_+^N,\;\bm Y_{ij},\bm G_{ij}\in \RR_+^{N\times N}&\forall i,j\in[K]\\
				&	&\bm p_i=\begin{bmatrix}
					\bm u_i\\ 1 \\ \bm s_i \\ w_i
				\end{bmatrix},\;\bm Q_{ij}=
				\begin{bmatrix}\bm V_{ij}&\bm u_i & \bm G_{ij} & \bm h_{ij}\\ 
					\bm u_j^\top & 1 & \bm s_j^\top & w_j \\
					\bm G_{ji}^\top &\bm s_i & \bm Y_{ij} & \bm r_{ij}\\
					\bm h_{ji}^\top & w_i & \bm r_{ji}^\top & z_{ij}
				\end{bmatrix}
				&\forall i,j\in[K]\\ 
				&	&\displaystyle\sum_{i\in[K]}\bm V_{ii}\mathbf e=\mathbf e\\
				&	&\diag(\bm V_{ii})=\bm h_{ii},\;\;
				\bm u_{i}+\bm s_i=w_i\mathbf e,\;\;
				\diag(\bm V_{ii}+ \bm Y_{ii}+2\bm G_{ii})+z_{ii}\mathbf e-2\bm h_{ii}-2\bm r_{ii}=\bm 0&\forall i\in[K].
			\end{array}
		\end{equation}
	\end{thm}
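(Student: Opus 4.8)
The plan is to derive the program above from the non-convex program \ref{eq:kmeans_1}, which already solves $K$-means clustering by Theorem \ref{thm:kmeans_1}, through a single application of Theorem \ref{thm:main}, in the spirit of the proof of Theorem \ref{thm:CPP}. First I would lift \ref{eq:kmeans_1} to a QCQP over the cone $\mathcal K=(\soc_+^{N+1}\times\RR_+^{N+1})^K$ whose decision blocks are $\bm p_i=(\bm u_i,t_i,\bm s_i,w_i)$: here $t_i$ puts $\bm u_i$ into a second-order cone via $\|\bm u_i\|\le t_i$, while $w_i\ge 0$ and $\bm s_i\in\RR_+^N$ are fresh variables tied to $\bm u_i$ by the linear relations $t_i=1$ and $\bm u_i+\bm s_i=w_i\mathbf e$ and by the quadratic relations $u_{in}^2=w_iu_{in}$ for $n\in[N]$, i.e. $\diag(\bm u_i\bm u_i^\top)=w_i\bm u_i$. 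Since every $\bm U\in\mathcal U(N,K)$ with $\sum_{i\in[K]}\bm u_i\bm u_i^\top\mathbf e=\mathbf e$ satisfies $u_{in}\in\{0,1/(\bm u_i^\top\mathbf e)\}$ (as established in the proof of Theorem \ref{thm:kmeans_1}), choosing $w_i=1/(\bm u_i^\top\mathbf e)$ and $\bm s_i=w_i\mathbf e-\bm u_i\ge\bm 0$ makes these auxiliary constraints consistent, so the lifted problem, carrying the objective $\tr(\bm X^\top\bm X)-\sum_{i\in[K]}\tr(\bm X^\top\bm X\bm u_i\bm u_i^\top)$, is equivalent to \ref{eq:kmeans_1}.

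Next I would invoke Theorem \ref{thm:main} with $\bm A\bm p=\bm b$ encoding $t_i=1$ and $\bm u_i+\bm s_i=w_i\mathbf e$, and with the chain $\mathcal T_0=\emptyset\subseteq\mathcal T_1\subseteq\mathcal T_2=[J]$, where $\mathcal T_1$ collects the constraints $\bm u_i^\top\bm u_i=1$, $\bm u_i^\top\bm u_j=0$ ($i\neq j$) and $u_{in}^2=w_iu_{in}$, while $\mathcal T_2\setminus\mathcal T_1$ collects the $N$ constraints $\sum_{i\in[K]}\sum_{m\in[N]}u_{in}u_{im}=1$ obtained by reading $\sum_{i\in[K]}\bm u_i\bm u_i^\top\mathbf e=\mathbf e$ coordinate by coordinate. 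For the members of $\mathcal T_1$, condition \eqref{eq:phi_j} is immediate: over $\mathcal F_0$ one has $\|\bm u_i\|\le t_i=1$, $\bm u_i\ge\bm 0$, and $u_{in}\le w_i$ (from $\bm s_i\ge\bm 0$), so $1=\max_{\mathcal F_0}\bm u_i^\top\bm u_i$, $0=\min_{\mathcal F_0}\bm u_i^\top\bm u_j$, and $0=\max_{\mathcal F_0}(u_{in}^2-w_iu_{in})$ because $u_{in}(u_{in}-w_i)\le 0$ with the value $0$ attained at $u_{in}=0$, all three extrema being finite. The coverage constraints must instead be treated over $\mathcal F_1$, and this is where the crux lies: on $\mathcal F_1$ the relation $u_{in}^2=w_iu_{in}$ with $\bm u_i\ge\bm 0$ forces $u_{in}\in\{0,w_i\}$, then $\bm u_i^\top\bm u_i=1$ forces $w_i>0$ and $|\operatorname{supp}\bm u_i|\,w_i^2=1$, and $\bm u_i^\top\bm u_j=0$ forces the supports $\operatorname{supp}\bm u_i$ to be pairwise disjoint; hence for every $n$, $\sum_{i}\sum_{m}u_{in}u_{im}=\sum_{i}u_{in}(\bm u_i^\top\mathbf e)=|\{i:u_{in}>0\}|\in\{0,1\}$, so $1=\max_{\mathcal F_1}\sum_{i}\sum_{m}u_{in}u_{im}$ and \eqref{eq:phi_j} holds. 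Without the auxiliary relation $u_{in}^2=w_iu_{in}$ this maximum strictly exceeds $1$, so the slack variables $\bm s_i,w_i$ are introduced precisely to force the coverage constraint into the required min/max shape.

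It remains to verify \eqref{eq:p_bar}: taking $\overline{\bm U}\in\mathcal U(N,K)$ from any valid clustering with $\sum_i\overline{\bm u}_i\overline{\bm u}_i^\top\mathbf e=\mathbf e$, together with $\overline t_i=1$, $\overline w_i=1/(\overline{\bm u}_i^\top\mathbf e)$, $\overline{\bm s}_i=\overline w_i\mathbf e-\overline{\bm u}_i$, one describes $\mathcal F^\infty$ explicitly (the directions in $\mathcal K$ annihilated by $\bm A$) and checks that each quadratic form indexed by $\mathcal T_1\cup(\mathcal T_2\setminus\mathcal T_1)$, as well as its gradient at $\overline{\bm p}$, vanishes along every such direction. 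With all hypotheses of Theorem \ref{thm:main} in place, I would finally substitute the lifted matrix $\bm Q_{ij}=\bm p_i\bm p_j^\top$ — with blocks $\bm V_{ij}=\bm u_i\bm u_j^\top$, $\bm G_{ij}=\bm u_i\bm s_j^\top$, $\bm Y_{ij}=\bm s_i\bm s_j^\top$, $\bm h_{ij}=w_j\bm u_i$, $\bm r_{ij}=w_j\bm s_i$, $z_{ij}=w_iw_j$ — and read off $\mathcal R$ as exactly the feasible set of $(\overline{\mathcal Z})$: the row $t_i=1$ is absorbed into the block pattern of $\bm p_i$ and $\bm Q_{ij}$, the rest of $\bm A\bm p=\bm b$ yields $\bm u_i+\bm s_i=w_i\mathbf e$, the identities $\diag(\bm A\bm Q\bm A^\top)=\bm b\circ\bm b$ on the rows of $\bm u_i+\bm s_i=w_i\mathbf e$ yield $\diag(\bm V_{ii}+\bm Y_{ii}+2\bm G_{ii})+z_{ii}\mathbf e-2\bm h_{ii}-2\bm r_{ii}=\bm 0$, and $\tr(\bm C_j\bm Q)+2\bm c_j^\top\bm p=\phi_j$ yields $\tr(\bm V_{ii})=1$, $\tr(\bm V_{ij})=0$, $\diag(\bm V_{ii})=\bm h_{ii}$, and $\sum_i\bm V_{ii}\mathbf e=\mathbf e$. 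Linearizing the objective via $\tr(\bm X^\top\bm X\bm u_i\bm u_i^\top)=\tr(\bm X^\top\bm X\bm V_{ii})$ then yields $(\overline{\mathcal Z})$, and its equivalence with $K$-means clustering follows from Theorem \ref{thm:kmeans_1}. I expect the combinatorial maximality argument for the coverage constraints and the careful bookkeeping of the recession directions contributed by $\bm s_i,w_i$ in \eqref{eq:p_bar} to be the two steps demanding the most attention.
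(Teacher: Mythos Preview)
Your proposal follows essentially the same route as the paper: the same lifted cone $\mathcal K=(\soc_+^{N+1}\times\RR_+^{N+1})^K$ with blocks $\bm p_i=(\bm u_i,t_i,\bm s_i,w_i)$, the same redundant constraints $\bm u_i+\bm s_i=w_i\mathbf e$ and $u_{in}^2=w_iu_{in}$ appended to \ref{eq:kmeans_1}, the same two-level chain $\mathcal T_0\subset\mathcal T_1\subset\mathcal T_2$ in Theorem~\ref{thm:main}, the same description of $\mathcal F^\infty$, and the same block linearization producing $(\overline{\mathcal Z})$.

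The one substantive divergence is in verifying \eqref{eq:phi_j} for the coverage constraints $\sum_{i\in[K]} u_{in}\bm u_i^\top\mathbf e=1$. The paper argues that $\min_{\bm p\in\mathcal F_1}\sum_{i} u_{in}\bm u_i^\top\mathbf e=1$, implicitly assuming every index $n$ lies in the support of some $\bm u_i$; you instead observe that on $\mathcal F_1$ this sum equals $|\{i:u_{in}>0\}|\in\{0,1\}$ and take $1=\max_{\bm p\in\mathcal F_1}\sum_i u_{in}\bm u_i^\top\mathbf e$. Your version is in fact the correct one: $\mathcal F_1$ does not force full coverage (take $N>K$ and supports of size one), so the minimum over $\mathcal F_1$ is $0$, not $1$. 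Since Theorem~\ref{thm:main} allows either a min or a max in \eqref{eq:phi_j}, your choice repairs this point without altering the overall architecture of the proof.
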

	\begin{proof}
		We consider the following equivalent reformulation of \ref{eq:kmeans_1} with two additional strengthening constraint systems. 
		\begin{equation}
			\label{eq:kmeans_3}
			\begin{array}{cll}
				\min&\displaystyle\tr(\bm X^\top\bm X)-\sum_{i\in[K]}\tr(\bm X^\top\bm X\bm u_i\bm u_i^\top)\\
				\st& \bm U\in\mathcal U(N,K),\;\bm S\in\RR_+^{N\times K},\;\bm w\in\RR_+^K\\
				&\displaystyle\sum_{i\in[K]}\bm u_i\bm u_i^\top\mathbf e  = \mathbf e \\
				&\bm u_{i}\circ\bm u_i=w_i\bm u_{i}&\forall i\in[K]\\
				&\bm u_{i}+\bm s_i=w_i\mathbf e&\forall i\in[K]
			\end{array}
		\end{equation}
		Since $\bm s_i\geq \bm 0$, the last constraint system in \eqref{eq:kmeans_3} implies that $\bm u_i\leq w_i\mathbf e$, while the penultimate constraint system  ensures that $\bm u_i$ is a binary vector, \ie, $\bm u_i\in\{0,w_i\}^N$ for some $w_i\in\RR_+$. Since any feasible solution to~\ref{eq:kmeans_1} satisfies these conditions, we may thus conclude that the problems  \ref{eq:kmeans_1} and  \eqref{eq:kmeans_3} are indeed equivalent. As we will see below, the exactness of the generalized completely positive programming reformulation is reliant on these two redundant constraint systems. 
		
		{
			We now repeat the same derivation steps as in the proof of Theorem \ref{thm:CPP}. First, we  introduce an auxiliary decision variable $\bm p=(\bm p_i)_{i\in[K]}$, that satisfies
			\begin{align*}
				\bm p_i=\begin{bmatrix}
					\bm u_i\\ t_i \\ \bm s_i \\ w_i
				\end{bmatrix}\in \soc_+^{N+1}\times \RR_+^{N+1}\qquad\forall i\in[K].
			\end{align*}
			We then set $\mathcal K=(\soc_+^{N+1}\times \RR_+^{N+1})^K$, and define the structured feasible sets
			\begin{equation}
				\label{eq:F_0}
				\mathcal F_0=\left\{\bm p\in\mathcal K:\begin{array}{ll}t_i=1&\forall i\in[K]\\
					\bm u_{i}+\bm s_i=w_i\mathbf e&\forall i\in[K]
				\end{array}\right\}, 
			\end{equation}	
			\begin{equation}
				\label{eq:F_1}
				\mathcal F_1=\left\{\bm p\in\mathcal F_0:\begin{array}{ll}
					\bm u_i^\top\bm u_i=1&\forall i\in[K]\\
					\bm u_i^\top\bm u_j=0&\forall i,j\in[K]:i\neq j\\
					\bm u_{i}\circ\bm u_i=w_i\bm u_{i}&\forall i\in[K]
				\end{array}\right\}, 
			\end{equation}		
			and	$\mathcal F_2=\mathcal F=\left\{\bm p\in\mathcal F_1:\sum_{i\in[K]}\bm u_i\bm u_i^\top\mathbf e  = \mathbf e\right\}$.
			Here, we find that the recession cone of $\mathcal F_0$ is given by
			\begin{equation*}
				\mathcal F^\infty=\left\{\bm p\in\mathcal K:\begin{array}{ll}\bm u_i=\bm 0,\;t_i=0&\forall i\in[K]\\
					\bm u_{i}+\bm s_i=w_i\mathbf e&\forall i\in[K]
				\end{array}
				\right\}.
			\end{equation*}
			Next, we set the vector $\overline{\bm p}=(\overline{\bm p}_1,\ldots,\overline{\bm p}_K)\in\mathcal F$ in Theorem \ref{thm:main} to satisfy
			\begin{equation*}
				\overline{\bm p}_i=\begin{bmatrix}
					\overline{\bm u}_i\\ 1\\ \overline{\bm s}_i\\ {\overline{w}_i}
				\end{bmatrix}\in \soc_+^{N+1}\times\RR^{N+1}_+\qquad\forall i\in[K],
			\end{equation*}
			where the subvectors $\{\overline{\bm u}_i\}_{i\in[K]}$,  $\{\overline{\bm s}_i\}_{i\in[K]}$  and {$\{\overline{w}_i\}_{i\in[K]}$}  are chosen so that they are feasible in \eqref{eq:kmeans_3}. In view of the description of the recession cone $\mathcal F^\infty$ and the structure of the quadratic constraints in $\mathcal F$, one can verify that such a vector $\overline{\bm p}$  satisfies the condition {\eqref{eq:p_bar}} in Theorem \ref{thm:main}. 
			
			It remains to show that condition~\eqref{eq:phi_j} is also satisfied. To this end, it is already verified in the proof of Theorem \ref{thm:CPP} that 
			\begin{equation*}
				\max_{\bm p\in\mathcal F_0}\left\{\bm u_i^\top\bm u_i\right\}=1\quad\forall i\in[K]\quad\textup{ and }\quad\min_{\bm p\in\mathcal F_0}\left\{ \bm u_i^\top\bm u_j\right\}=0\quad\forall i,j\in[K]:i\neq j.
			\end{equation*}
			We now show that 
			\begin{equation}
				\label{eq:u_square}
				\min_{\bm p\in\mathcal F_0}\left\{ w_iu_{in}-u_{in}^2\right\}=0\qquad\forall i\in[K]\;\forall n\in[N].
			\end{equation}
			We first demonstrate that the constraint $\bm u_i+\bm s_i=w_i\mathbf e$ in \eqref{eq:F_0} implies  $\bm u_i\circ\bm u_i\leq w_i\bm u_i$. Indeed, since $\bm s_i\geq \bm 0$, we have $w_i\mathbf e-\bm u_i\geq\bm 0$. Applying a componentwise multiplication with the components of  $\bm u_i\geq\bm 0$ on the left-hand side, we arrive at the desired inequality. Thus, we  find that each equation in~\eqref{eq:u_square} indeed holds, where  equality is attained whenever $u_{in}=0$. Finally, we verify that 
			\begin{equation}
				\label{eq:sum_V}
				\min_{\bm p\in\mathcal F_1} \left\{\sum_{i\in[K]} u_{in} \bm u_i^\top\mathbf e\right\}=1\qquad\forall n\in[N].
			\end{equation}
			Note that the constraint $\bm u_i\circ\bm u_i=w_i\bm u_i$ in \eqref{eq:F_1} implies that $\bm u_i\in\{0,w_i\}^N$, while
			the constraint $\bm u_i^\top\bm u_i=1$ further implies that {$\#\bm u_i w_i^2 = 1$}. Moreover, the
			complementary constraint $\bm u_i^\top\bm u_j=0$ ensures that 
			\begin{equation*}
				u_{in} > 0 \Rightarrow u_{jn} = 0 \quad\text{and}\quad u_{jn} > 0 \Rightarrow u_{in} = 0\qquad\forall n\in[N] \;\forall i,j\in[K]:i\neq j.
			\end{equation*}
			Thus, for any feasible vector $\bm p\in\mathcal F_1$, we have
			\begin{equation*}
				\sum_{i\in[K]} u_{in} \bm u_i^\top\mathbf e=\sum_{i\in[K]} u_{in} w_i\#\bm u_i=\sum_{i\in[K]}{\frac{u_{in}}{w_i} = \frac{w_k}{w_k}} = 1,
			\end{equation*} 
			for some $k\in[K]$ such that $u_{kn}=w_k$. Thus, the equalities \eqref{eq:sum_V} indeed hold. In summary, we have shown that all conditions in Theorem \ref{thm:main} are satisfied.

			We now introduce new variables, in addition to the ones described in \eqref{matvar}, that {linearize} the quadratic terms, as follows:
			\begin{align}
				\label{matvar1}
				z_{ij}=w_iw_j,\;\bm h_{ij}=\bm u_iw_j,\;\bm r_{ij}=\bm s_iw_j,\;\bm Y_{ij}=\bm s_i\bm s_j^\top,\;\bm G_{ij}=\bm u_i\bm s_j^\top\;\qquad\forall i,j\in[K].	
			\end{align}
			We further define an auxiliary decision variable $\bm Q_{ij}$, $i, j \in[K]$, that satisfy
			\begin{align*}
				\bm Q_{ij} = \bm p_i \bm p_j^\top = 
				\begin{bmatrix}\bm V_{ij}&\bm u_i & \bm G_{ij} & \bm h_{ij}\\ 
					\bm u_j^\top & 1 & \bm s_j^\top & w_j \\
					\bm G_{ji}^\top &\bm s_i & \bm Y_{ij} & \bm r_{ij}\\
					\bm h_{ji}^\top & w_i & \bm r_{ji}^\top & z_{ij}
				\end{bmatrix}.
			\end{align*}
			Using these new terms, we construct the set $\mathcal R$ in Theorem \ref{thm:main} as follows: 
			\begin{equation*}
				\mathcal R=\left\{\begin{bmatrix}
					\bm Q_{11}&\cdots & \bm Q_{1K}&\bm p_1\\
					\vdots &\ddots&\vdots&\vdots\\
					\bm Q_{K1}&\cdots&\bm Q_{KK}&\bm p_K\\
					\bm p_{1}^\top&\cdots&\bm p_{K}^\top&1\\
				\end{bmatrix} 
				\in\mathcal{C}(\mathcal K \times \RR_+):
				\begin{array}{ll}
					\bm p_i=\begin{bmatrix}
						\bm u_i\\ 1 \\ \bm s_i \\ w_i
					\end{bmatrix},\;
					\bm Q_{ij}=
					\begin{bmatrix}\bm V_{ij}&\bm u_i & \bm G_{ij} & \bm h_{ij}\\ 
						\bm u_j^\top & 1 & \bm s_j^\top & w_j \\
						\bm G_{ji}^\top &\bm s_i & \bm Y_{ij} & \bm r_{ij}\\
						\bm h_{ji}^\top & w_i & \bm r_{ji}^\top & z_{ij}
					\end{bmatrix}&\forall i,j\in[K]\\
					\tr(\bm V_{ii}) = 1& \forall i \in [K]\\ 
					\tr(\bm V_{ij}) = 0 & \forall i, j \in [K]: i \neq j	\\
					\sum_{i\in[K]}\bm V_{ii}\mathbf e=\mathbf e \\
					\diag(\bm V_{ii})=\bm h_{ii}, \ \ \bm u_{i}+\bm s_i=w_i\mathbf e & \forall i \in [K]\\
					\diag(\bm V_{ii}+ \bm Y_{ii}+2\bm G_{ii})\\
					\qquad+z_{ii}\mathbf e-2\bm h_{ii}-2\bm r_{ii}=\bm 0 &\forall i\in[K]
				\end{array}\right\}. 
			\end{equation*}
			%
			Here, the last constraint system arises from squaring the left-hand sides of the equalities
			\begin{equation*}
				u_{in}+s_{in}-w_i=0\qquad\forall i\in[K]\;\forall n\in[N],
			\end{equation*}
			which correspond to the  last constraint system in \eqref{eq:kmeans_3}. 
			Finally by linearizing the objective function using variables in \eqref{matvar} and \eqref{matvar1}, we  arrive at the generalized completely positive program \ref{eq:kmeans_CPP}. This completes the proof. 	}	\end{proof}	
	
	\section{Approximation Algorithm for $K$-means Clustering}
	\label{sec:kmeans_algo}
	In this section, we develop a new approximation algorithm for  $K$-means clustering. To this end, we observe that in the reformulation \ref{eq:kmeans_CPP} the difficulty of {the} original problem is now entirely absorbed in the completely positive cone $\mathcal C(\cdot)$ which has been well studied in the literature~\cite{bomze2002solving,burer2012copositive,DKP02:copositive}. Any such completely positive program admits the hierarchy of increasingly accurate  SDP relaxations that are obtained by replacing  the  cone $\mathcal C(\cdot)$ with  progressively tighter semidefinite-representable outer approximations~\cite{DKP02:copositive,lasserre2009convexity,parrilo2000structured}. 
	{For the {generalized} completely positive program \ref{eq:kmeans_CPP}, we employ the simplest outer approximation 
		that is obtained by replacing the completely positive cone $\mathcal C\left((\soc_+^{N+1} \times \RR_+^{N+1})^K\times \RR_+\right)$ in \ref{eq:kmeans_CPP} with 
		its coarsest outer approximation~\cite{sturm2003cones}, given by the cone}
	\begin{equation*}
		\left\{\bm M\in\mathbb S^{2K(N+1)+1}:\bm M\succeq \bm 0,\;\bm M\geq \bm 0,\;\tr(\mathbb J_i\bm M)\geq 0\;\; i\in[K]\right\},
	\end{equation*} 
	where 
	\begin{equation*}\begin{array}{c}
			\mathbb J_1=\diag\left([-\mathbf e^\top,1,\bm 0^\top,0,\cdots,\bm 0^\top,0,0]^\top\right),\\
			\mathbb J_2=\diag\left([\bm 0^\top,0,-\mathbf e^\top,1,\cdots,\bm 0^\top,0,0]^\top\right),\\
			\cdots\\
			\mathbb J_K=\diag\left([\bm 0^\top,0,-\mathbf 0^\top,0,\cdots,\mathbf e^\top,1,0]^\top\right).
		\end{array}
	\end{equation*}
	If $\bm M$ has the structure of the large matrix in \ref{eq:kmeans_CPP} then the constraint $\tr(\mathbb J_i\bm M)\geq 0$  reduces to $\tr(\bm V_{ii})\leq 1$, which is redundant and can safely be omitted in view of the stronger equality constraint $\tr(\bm V_{ii})= 1$ in \ref{eq:kmeans_CPP}. In this case, the outer approximation can be simplified to the cone of doubly non-negative matrices given by 
	\begin{equation*}
		\left\{\bm M\in\mathbb S^{2K(N+1)+1}:\bm M\succeq \bm 0,\;\bm M\geq \bm 0\right\}. 
	\end{equation*}
	{To further improve computational tractability, we  relax the large semidefinite constraint into  a simpler system of $K$ semidefinite constraints. We summarize our formulation in the following proposition. } 
	
	\begin{prop}
		The optimal value of the following SDP constitutes a lower bound on $Z^\star$.
		\begin{equation}
			\tag{${\mathcal R}_0$}
			\label{eq:kmeans_SDP}
			\begin{array}{ccll}
				R_0^\star=&\min&\displaystyle\tr(\bm X^\top\bm X)-\sum_{i\in[K]}\tr(\bm X^\top\bm X\bm V_{i})\\
				&\st& \bm p_i\in\soc_+^{N+1}\times \RR_+^{N+1},\;\bm Q_{i}\in \RR_+^{2(N+1)\times 2(N+1)},\;\bm V_{i}\in \mathbb R_+^{N\times N}&\forall i\in[K]\\
				&& w_i\in\RR_+,\; z_{i} \in \RR_+\;,\bm u_i,\bm s_i,\bm h_{i},\bm r_{i}\in\RR_+^N,\;\bm Y_{i},\bm G_{i}\in \RR_+^{N\times N}&\forall i\in[K]\\
				&	&\bm p_i=\begin{bmatrix}
					\bm u_i\\ 1 \\ \bm s_i \\ w_i
				\end{bmatrix},\;\bm Q_{i}=
				\begin{bmatrix}\bm V_{i}&\bm u_i & \bm G_{i} & \bm h_{i}\\ 
					\bm u_i^\top & 1 & \bm s_i^\top & w_i \\
					\bm G_{i}^\top &\bm s_i & \bm Y_{i} & \bm r_{i}\\
					\bm h_{i}^\top & w_i & \bm r_{i}^\top & z_{i}
				\end{bmatrix}
				&\forall i\in[K]\\ 
				&	&\displaystyle\sum_{i\in[K]}\bm V_{i}\mathbf e=\mathbf e\\
				&	&\tr(\bm V_{i})=1,\;\;
				\diag(\bm V_{i})=\bm h_{i},\;\;
				\bm u_{i}+\bm s_i=w_i\mathbf e&\forall i \in [K]\\
				& &\diag(\bm V_{i}+ \bm Y_{i}+2\bm G_{i})+z_{i}\mathbf e-2\bm h_{i}-2\bm r_{i}=\bm 0&\forall i\in[K]\\
				& & \mathbf e_1^\top\bm V_{1}\mathbf e=1 \\
				&	&\begin{bmatrix}
					\bm Q_{i}& \bm p_i\\
					\bm p_{i}^\top &1\\
				\end{bmatrix} \succeq \bm 0 &\forall i \in [K]
			\end{array}
		\end{equation}
	\end{prop}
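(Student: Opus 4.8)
The plan is to exhibit a feasible point of $(\mathcal{R}_0)$ with objective value $Z^\star$; since $(\mathcal{R}_0)$ is a minimization problem, this immediately yields $R_0^\star \leq Z^\star$. The point will be built from an optimal $K$-means clustering, and I would avoid a purely generic ``$(\mathcal{R}_0)$ is an outer approximation of $(\overline{\mathcal{Z}})$'' argument, since that does not by itself justify the symmetry-breaking constraint $\mathbf{e}_1^\top\bm{V}_1\mathbf{e} = 1$; instead I would verify the candidate point directly against the constraint system.

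By Theorem~\ref{thm:kmeans_1}, there is an optimal solution $(\bm{u}_i)_{i\in[K]}$ of $(\mathcal{Z})$ with objective value $Z^\star$. As established in the proof of that theorem, the constraints of $(\mathcal{Z})$ force each $\bm{u}_i$ to be two-valued, $\bm{u}_i\in\{0,w_i\}^N$ with $w_i:=1/(\bm{u}_i^\top\mathbf{e})$, and the supports of $\bm{u}_1,\dots,\bm{u}_K$ partition $[N]$. Since both the objective and every constraint of $(\mathcal{Z})$ are invariant under relabeling the clusters, I may assume without loss of generality that data point~$1$ belongs to the first cluster, i.e.\ $u_{11}=w_1$. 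I would then define, for each $i\in[K]$,
\begin{equation*}
  \bm{s}_i := w_i\mathbf{e}-\bm{u}_i \;\ge\; \bm{0},\qquad
  \bm{p}_i := \begin{bmatrix}\bm{u}_i\\ 1\\ \bm{s}_i\\ w_i\end{bmatrix},\qquad
  \bm{Q}_i := \bm{p}_i\bm{p}_i^\top,
\end{equation*}
so that $\bm{Q}_i$ automatically has the block structure prescribed in $(\mathcal{R}_0)$, with $\bm{V}_i=\bm{u}_i\bm{u}_i^\top$, $\bm{G}_i=\bm{u}_i\bm{s}_i^\top$, $\bm{Y}_i=\bm{s}_i\bm{s}_i^\top$, $\bm{h}_i=w_i\bm{u}_i$, $\bm{r}_i=w_i\bm{s}_i$, and $z_i=w_i^2$.

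The remaining work is to verify feasibility constraint by constraint. Nonnegativity of all blocks is immediate; $\bm{p}_i\in\soc_+^{N+1}\times\RR_+^{N+1}$ holds because $\|\bm{u}_i\|_2=1$; each semidefinite constraint holds since $\begin{bmatrix}\bm{Q}_i & \bm{p}_i\\ \bm{p}_i^\top & 1\end{bmatrix}$ is the rank-one matrix $\begin{bmatrix}\bm{p}_i\\ 1\end{bmatrix}\begin{bmatrix}\bm{p}_i\\ 1\end{bmatrix}^\top\succeq\bm{0}$; the identity $\sum_i\bm{V}_i\mathbf{e}=\sum_i\bm{u}_i\bm{u}_i^\top\mathbf{e}=\mathbf{e}$ is precisely the partition constraint of $(\mathcal{Z})$; $\tr(\bm{V}_i)=\|\bm{u}_i\|_2^2=1$; $\diag(\bm{V}_i)=\bm{u}_i\circ\bm{u}_i=w_i\bm{u}_i=\bm{h}_i$ using $\bm{u}_i\in\{0,w_i\}^N$; $\bm{u}_i+\bm{s}_i=w_i\mathbf{e}$ by definition; and the last equality follows exactly as in the proof of $(\overline{\mathcal{Z}})$ by squaring the components of $u_{in}+s_{in}-w_i=0$, since $\diag(\bm{V}_i+\bm{Y}_i+2\bm{G}_i)=(\bm{u}_i+\bm{s}_i)\circ(\bm{u}_i+\bm{s}_i)$. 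The symmetry-breaking constraint holds because $\mathbf{e}_1^\top\bm{V}_1\mathbf{e}=u_{11}(\bm{u}_1^\top\mathbf{e})=w_1\cdot(1/w_1)=1$. Finally, the objective of $(\mathcal{R}_0)$ evaluates to $\tr(\bm{X}^\top\bm{X})-\sum_i\tr(\bm{X}^\top\bm{X}\bm{u}_i\bm{u}_i^\top)$, which is the objective of $(\mathcal{Z})$ at $(\bm{u}_i)_{i\in[K]}$, i.e.\ $Z^\star$. Hence $R_0^\star\leq Z^\star$.

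I expect the main difficulty to be bookkeeping rather than conceptual: aligning the block partition of $\bm{Q}_i=\bm{p}_i\bm{p}_i^\top$ with the auxiliary matrices $\bm{V}_i,\bm{G}_i,\bm{Y}_i,\bm{h}_i,\bm{r}_i,z_i$ and checking the componentwise-squaring equality. The one genuinely conceptual point is that the cut $\mathbf{e}_1^\top\bm{V}_1\mathbf{e}=1$ is valid only because $(\mathcal{Z})$ enforces integrality — so that point~$1$ lies in a \emph{unique} cluster and the relabeling step is legitimate — and this is precisely where one must be careful not to argue via an arbitrary completely positive feasible point of $(\overline{\mathcal{Z}})$, for which the cut could fail.
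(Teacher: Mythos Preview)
Your proof is correct. You construct a rank-one feasible point of $(\mathcal{R}_0)$ directly from an optimal clustering of $(\mathcal{Z})$, and every constraint check goes through as you outline; in particular your handling of the symmetry-breaking cut $\mathbf{e}_1^\top\bm{V}_1\mathbf{e}=1$ via relabeling is exactly right, and your caution about why one cannot argue through an arbitrary feasible point of $(\overline{\mathcal{Z}})$ is well placed.

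The paper takes a different route. Rather than building a feasible point, it argues by a chain of relaxations: starting from the exact completely positive reformulation $(\overline{\mathcal{Z}})$, it first appends the cut $\mathbf{e}_1^\top\bm{V}_{11}\mathbf{e}=1$ (valid because one can always assign $\bm{x}_1$ to cluster~$1$), then replaces the generalized completely positive cone by the doubly non-negative cone, then weakens the single large semidefinite constraint to the $K$ principal $2\times 2$-block constraints, and finally drops the cross-block constraints $\tr(\bm{V}_{ij})=0$ for $i\neq j$ together with all off-diagonal variables. Each step enlarges the feasible set, so the optimal value can only decrease, giving $R_0^\star\le Z^\star$. Your direct-construction argument is more elementary and self-contained, since it does not pass through $(\overline{\mathcal{Z}})$ or any conic relaxation machinery; the paper's relaxation-chain argument, on the other hand, makes transparent exactly which structure of $(\overline{\mathcal{Z}})$ is discarded at each stage, which is what later motivates the further simplifications in $(\overline{\mathcal{R}}_0)$ and $(\mathcal{R}_1)$.
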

	
	\begin{proof}
		Without loss of generality,  we can assign the first data point $\bm x_1$ to the first cluster. 
		The argument in {the} proof of Theorem \ref{thm:kmeans_1} indicates that the assignment vector for the first cluster is given by
		\begin{equation*}
			\bm\pi_1=\bm u_1\bm u_1^\top\mathbf e=\bm V_{11}\mathbf e.
		\end{equation*} 
		Thus, the data point $\bm x_1$ is assigned to the first cluster if and only if the first element of $\bm\pi_1$ is equal to~$1$, \ie, $1=\mathbf e_1^\top\bm \pi_1=\mathbf e_1^\top\bm V_{11}\mathbf e$. Henceforth, we shall add this constraint to  \ref{eq:kmeans_CPP}. While the constraint is redundant for the completely positive program  \ref{eq:kmeans_CPP}, it will cut-off any symmetric solution in the resulting SDP relaxation. 
		
		We now replace the {generalized} completely positive cone in \ref{eq:kmeans_CPP} with the corresponding cone of doubly non-negative matrices, which yields the following SDP relaxation:
		\begin{equation}
			\label{eq:kmeans_SDP_big}
			\begin{array}{ccll}
				&\min&\displaystyle\tr(\bm X^\top\bm X)-\sum_{i\in[K]}\tr(\bm X^\top\bm X\bm V_{ii})\\
				&\st& {\bm p_i\in\soc_+^{N+1}\times \RR_+^{N+1},\;\bm Q_{ij}\in \RR_+^{2(N+1)\times 2(N+1)},\;{\bm V_{ij}\in\RR_+^{N\times N}}}&\forall i,j\in[K]\\
				&& {\bm w\in\RR_+^K,\; z_{ij} \in \RR_+\;,\bm u_i,\bm s_i,\bm h_{ij},\bm r_{ij}\in\RR_+^N,\;\bm Y_{ij},\bm G_{ij}\in \RR_+^{N\times N}}&\forall i,j\in[K]\\
				&	&{\bm p_i=\begin{bmatrix}
						\bm u_i\\ 1 \\ \bm s_i \\ w_i
					\end{bmatrix},\;\bm Q_{ij}=
					\begin{bmatrix}\bm V_{ij}&\bm u_i & \bm G_{ij} & \bm h_{ij}\\ 
						\bm u_j^\top & 1 & \bm s_j^\top & w_j \\
						\bm G_{ji}^\top &\bm s_i & \bm Y_{ij} & \bm r_{ij}\\
						\bm h_{ji}^\top & w_i & \bm r_{ji}^\top & z_{ij}
				\end{bmatrix}}
				&\forall i,j\in[K]\\ 
				&&\tr(\bm V_{ii})=1 &\forall i\in[K]\\ 
				&&\tr(\bm V_{ij})=0 &\forall i,j\in[K]:i\neq j\\
				&	&{\diag(\bm V_{ii})=\bm h_{ii},\;\;
					\bm u_{i}+\bm s_i=w_i\mathbf e,\;\;
					\diag(\bm V_{ii}+ \bm Y_{ii}+2\bm G_{ii})+z_{ii}\mathbf e-2\bm h_{ii}-2\bm r_{ii}=\bm 0}&\forall i\in[K]\\
				& &\mathbf e_1^\top\bm V_{11}\mathbf e = 1\\
				&	&\begin{bmatrix}
					\bm Q_{11}&\cdots & \bm Q_{1K}&\bm p_1\\
					\vdots &\ddots&\vdots&\vdots\\
					\bm Q_{K1}&\cdots&\bm Q_{KK}&\bm p_K\\
					\bm p_{1}^\top&\cdots&\bm p_{K}^\top&1\\
				\end{bmatrix}\succeq \bm 0
			\end{array}
		\end{equation}
		{Since all principal submatrices of the {large} matrix 
			are {also} positive semidefinite, we can further relax the constraint to a more  tractable system
			\begin{equation*}
				\begin{bmatrix}
					\bm Q_{ii}& \bm p_i\\
					\bm p_{i}^\top &1\\
				\end{bmatrix} \succeq \bm 0 \qquad\forall i \in [K]. 
			\end{equation*}
			Next, we eliminate the constraints $\tr(\bm V_{ij})=0$,  $i,j\in[K]:i\neq j$, from \eqref{eq:kmeans_SDP_big}. As the other constraints and the objective function  in the resulting formulation do not involve the decision variables~$\bm V_{ij}$ and $\bm Q_{ij}$,  for any $i,j\in[K]$ such that $i\neq j$, we can  safely omit these decision variables. 
			Finally, by renaming all  double subscript variables, 
			\eg,  $\bm Q_{ii}$ to $\bm Q_{i}$, we arrive at the desired semidefinite program~\ref{eq:kmeans_SDP}. This completes the proof. 
		}
	\end{proof}
	\noindent The symmetry breaking constraint $\mathbf e_1^\top\bm V_{1}\mathbf e=1$ in \ref{eq:kmeans_SDP} ensures that the solution $\bm V_{1}$ will be different from any of the solutions $\bm V_{i}$, $i\geq 2$. Specifically, the constraint $\sum_{i\in[K]}\bm V_{i}\mathbf e=\mathbf e$ in \ref{eq:kmeans_SDP} along with the aforementioned symmetry breaking constraint implies that $\mathbf e_1^\top\bm V_{i}\mathbf e=0$  for all $i\geq 2$. Thus, any rounding scheme that identifies the clusters using the solution $(\bm V_i)_{i\in[K]}$ will always assign the data point $\bm x_1$ to the first cluster. It can be shown, however, that there exists a \emph{partially} symmetric optimal solution to \ref{eq:kmeans_SDP} with $\bm V_2=\dots=\bm V_K$. This enables us to derive a further simplification to \ref{eq:kmeans_SDP}. 
	
	\begin{coro}
		\label{coro:kmeans_SDP_simple}
		Problem \ref{eq:kmeans_SDP} is equivalent to the semidefinite program given by
		\begin{equation}
			\label{eq:kmeans_SDP_simple}
			\tag{$\overline{\mathcal R}_0$}
			\begin{array}{ccll}
				R_0^\star=&\min&\displaystyle\tr(\bm X^\top\bm X)-\tr(\bm X^\top\bm X\bm W_1)-\tr(\bm X^\top\bm X\bm W_2)\\
				&\st& \bm \alpha_i\in\soc_+^{N+1}\times \RR_+^{N+1},\;\bm \Gamma_{i} \in \RR_+^{2(N+1)\times 2(N+1)},\;\bm W_i\in \RR_+^{N\times N}&\forall i=1,2\\
				&& \rho_i\in\RR_+,\; \beta_i\in \RR_+\;,\bm \gamma_i, \bm \eta_i,\bm \psi_i ,\bm \theta_i \in\RR_+^N,\;\bm \Sigma_i,\bm \Theta_i \in \RR_+^{N\times N}&\forall i=1,2\\
				&	&\bm \alpha_i=\begin{bmatrix}
					\bm \gamma_i\\ 1 \\ \bm \eta_i \\ \rho_i
				\end{bmatrix},\;\bm \Gamma_{i}=
				\begin{bmatrix}\bm W_{i}&\bm \gamma_i& \bm \Theta_{i} & \bm \psi_{i}\\ 
					\bm \gamma_i^\top & 1 & \bm \eta_i^\top & \rho_i \\
					\bm \Theta_{i}^\top &\bm \eta_i & \bm \Sigma_{i} & \bm \theta_{i}\\
					\bm \psi_{i}^\top & \rho_i & \bm \theta_{i}^\top & \beta_{i}
				\end{bmatrix}&\forall i=1,2\\ 
				&&\tr(\bm W_1)=1,\;\tr(\bm W_2)=K-1\\ 
				&&\diag(\bm W_{i})=\bm \psi_{i},\;\bm \gamma_i+\bm \eta_i=\rho_i\mathbf e,\;\diag(\bm W_{i}+ \bm \Sigma_{i}+2\bm \Theta_{i})+\beta_{i}\mathbf e-2\bm \psi_{i}-2\bm \theta_{i}=\bm 0&
				\forall i=1,2 \\
				&&\displaystyle\bm W_{1}\mathbf e+\bm W_2\mathbf e=\mathbf e	\\
				&&\mathbf e_1^\top\bm W_1\mathbf e=1\\			
				&&\begin{bmatrix}
					\bm \Gamma_{1}& \bm \alpha_1\\
					\bm \alpha_{1}^\top &1\\
				\end{bmatrix} \succeq \bm 0,\;\begin{bmatrix}
					\bm \Gamma_{2}& \bm \alpha_2\\
					\bm \alpha_{2}^\top &K-1\\
				\end{bmatrix} \succeq \bm 0.
			\end{array}
		\end{equation}
	\end{coro}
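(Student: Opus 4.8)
The plan is to establish the equivalence by exhibiting a pair of mutually inverse (up to objective value) constructions between feasible solutions of \ref{eq:kmeans_SDP} and feasible solutions of \eqref{eq:kmeans_SDP_simple}. First I would prove the claim asserted in the paragraph preceding the corollary, namely that \ref{eq:kmeans_SDP} admits an optimal solution with $\bm V_2=\dots=\bm V_K$ (and correspondingly all auxiliary blocks for $i\ge 2$ equal). To do this I would take an arbitrary optimal solution $(\bm p_i,\bm Q_i,\bm V_i,\dots)_{i\in[K]}$ and average the blocks over $i=2,\dots,K$: set $\bar{\bm V}=\frac{1}{K-1}\sum_{i\ge 2}\bm V_i$, and likewise average $\bm u_i,\bm s_i,w_i,\bm h_i,\bm r_i,\bm Y_i,\bm G_i,z_i$ and the matrices $\bm Q_i,\bm p_i$. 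Linearity of all the equality constraints (e.g.\ $\bm u_i+\bm s_i=w_i\mathbf e$, $\diag(\bm V_i)=\bm h_i$, $\diag(\bm V_i+\bm Y_i+2\bm G_i)+z_i\mathbf e-2\bm h_i-2\bm r_i=\bm 0$, $\tr(\bm V_i)=1$) means the averaged blocks still satisfy them; the PSD constraint $\begin{bmatrix}\bm Q_i&\bm p_i\\\bm p_i^\top&1\end{bmatrix}\succeq\bm 0$ and nonnegativity are preserved under convex combination; and the summation constraint $\sum_{i\in[K]}\bm V_i\mathbf e=\mathbf e$ is preserved since $\bm V_1+(K-1)\bar{\bm V}\mathbf e=\bm V_1\mathbf e+\sum_{i\ge 2}\bm V_i\mathbf e=\mathbf e$. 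The objective is linear in $(\bm V_i)$, so the averaged solution has the same (optimal) value. Hence WLOG $\bm V_2=\dots=\bm V_K=:\bar{\bm V}$ with $\tr(\bar{\bm V})=1$.

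Next I would perform the change of variables that aggregates the $K-1$ identical copies into a single block scaled appropriately. The natural scaling mirrors the corollary's data: identify $\bm W_1$ with $\bm V_1$ and $\bm W_2$ with $(K-1)\bar{\bm V}$ (so that $\tr(\bm W_2)=K-1$ and $\bm W_1\mathbf e+\bm W_2\mathbf e=\bm V_1\mathbf e+\sum_{i\ge 2}\bm V_i\mathbf e=\mathbf e$), and similarly set the second-block analogues of $\bm u,\bm s,w,\bm h,\bm r,\bm Y,\bm G,z$ by multiplying the common value by $K-1$ for the linear (vector) quantities and noting that the quadratic block $\bm \Gamma_2$ must be scaled so that $\begin{bmatrix}\bm\Gamma_2&\bm\alpha_2\\\bm\alpha_2^\top&K-1\end{bmatrix}\succeq\bm 0$ holds — precisely because this matrix equals $(K-1)$ times the rank-type block $\begin{bmatrix}\bm Q_i&\bm p_i\\\bm p_i^\top&1\end{bmatrix}$ associated to any single copy $i\ge 2$, and scaling a PSD matrix by the positive constant $K-1$ keeps it PSD. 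I would then check that each structural equality of \eqref{eq:kmeans_SDP_simple} for $i=2$ is exactly $(K-1)$ times the corresponding equality for a single copy in \ref{eq:kmeans_SDP} (the constraint $\bm\gamma_2+\bm\eta_2=\rho_2\mathbf e$ becomes $(K-1)(\bm u+\bm s)=(K-1)w\mathbf e$, etc.), so feasibility transfers. The objective matches since $\tr(\bm X^\top\bm X\bm W_1)+\tr(\bm X^\top\bm X\bm W_2)=\tr(\bm X^\top\bm X\bm V_1)+\sum_{i\ge 2}\tr(\bm X^\top\bm X\bm V_i)=\sum_{i\in[K]}\tr(\bm X^\top\bm X\bm V_i)$. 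This shows $R_0^\star$ (from \ref{eq:kmeans_SDP}) $\ge$ the optimal value of \eqref{eq:kmeans_SDP_simple}.

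For the reverse direction I would take any feasible solution of \eqref{eq:kmeans_SDP_simple} and undo the aggregation: keep the first block as cluster $1$, and define clusters $2,\dots,K$ by the common block obtained from block $2$ divided by $K-1$ (dividing the linear quantities by $K-1$ and the quadratic block $\bm\Gamma_2$ by $K-1$, which again preserves PSD-ness since $K-1>0$ and the trailing diagonal entry $K-1$ of the PSD matrix scales to $1$). One verifies $\sum_{i\in[K]}\bm V_i\mathbf e=\bm W_1\mathbf e+(K-1)\cdot\frac{1}{K-1}\bm W_2\mathbf e=\mathbf e$, $\tr(\bm V_i)=\frac{1}{K-1}\tr(\bm W_2)=1$ for $i\ge 2$, and all the per-$i$ linear constraints hold after dividing through; the symmetry-breaking constraint $\mathbf e_1^\top\bm V_1\mathbf e=\mathbf e_1^\top\bm W_1\mathbf e=1$ carries over verbatim. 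Thus the two problems have equal optimal values, proving the equivalence.

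The main obstacle I anticipate is the bookkeeping around the PSD constraints under scaling: one must be careful that in \eqref{eq:kmeans_SDP_simple} the trailing corner of the second PSD block is $K-1$ rather than $1$, and that this is exactly what makes $\bm\Gamma_2=(K-1)\bm Q_i$, $\bm\alpha_2=(K-1)\bm p_i$ consistent — because homogenizing a vector $\bm p$ with $1$ and scaling by $K-1$ gives the homogenization of $(K-1)\bm p$ with $K-1$, not with $1$. Verifying that every cross-term block ($\bm\Theta_i,\bm\Sigma_i,\bm\psi_i,\bm\theta_i,\beta_i$) scales with the same single factor $K-1$ (and not, say, $(K-1)^2$ for the genuinely quadratic blocks) requires noting that in \eqref{eq:kmeans_SDP_simple} these are \emph{not} constrained to equal outer products of the linear variables — the only coupling is through the linear equalities and the single PSD constraint — so the consistent choice is the uniform factor $K-1$ inherited from scaling one copy's PSD block. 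Once this scaling convention is pinned down, the remaining verifications are the routine linear-algebra checks sketched above.
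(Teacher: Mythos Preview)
Your proposal is correct and is essentially the paper's argument, just with an unnecessary detour. The paper's proof skips the averaging step entirely: for the forward direction it takes \emph{any} feasible solution of \ref{eq:kmeans_SDP} and directly sets $\bm\alpha_1=\bm p_1$, $\bm\Gamma_1=\bm Q_1$, $\bm\alpha_2=\sum_{i=2}^{K}\bm p_i$, $\bm\Gamma_2=\sum_{i=2}^{K}\bm Q_i$; since $(K-1)\cdot(\text{average})=\text{sum}$, this is literally the same map you describe, but without first having to argue that an optimal symmetric solution exists. Your reverse direction (divide block~$2$ by $K-1$ and replicate it $K-1$ times) is exactly the paper's construction. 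The bookkeeping you flag about the trailing corner $K-1$ in the second PSD block is precisely the point, and your observation that all sub-blocks scale by the single factor $K-1$ (not $(K-1)^2$) because they are free SDP variables rather than genuine outer products is correct.
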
	
	\begin{proof}	{
			Any feasible solution to \ref{eq:kmeans_SDP} can be used to construct a feasible solution to \ref{eq:kmeans_SDP_simple} with the same objective value, as follows:
			\begin{align*}
				&\bm \alpha_1 = \bm p_1,\quad\bm \alpha_2=\sum_{i=2}^K\bm p_i,\quad\bm \Gamma_1=\bm Q_1,\quad\bm \Gamma_2=\sum_{i=2}^K\bm Q_i.
			\end{align*}
			Conversely, any feasible solution to \ref{eq:kmeans_SDP_simple} can also  be used to construct a feasible solution to \ref{eq:kmeans_SDP} with the same objective value:
			\begin{equation*}
				\bm p_1 = \bm \alpha_1,\quad\bm p_i=\frac{1}{K-1}\bm\alpha_2,\quad\bm Q_1=\bm \Gamma_1,\quad\bm Q_i=\frac{1}{K-1}\bm \Gamma_2\qquad\forall i=2,\dots,K. 
			\end{equation*}
			Thus, the claim follows.
		}
	\end{proof}
	{
		By eliminating the constraints $\diag(\bm W_{i})=\bm \psi_{i},\;\bm \gamma_i+\bm \eta_i=\rho_i\mathbf e,\;\diag(\bm W_{i}+ \bm \Sigma_{i}+2\bm \Theta_{i})+\beta_{i}\mathbf e-2\bm \psi_{i}-2\bm \theta_{i}=0$, $i=1,2$, from \ref{eq:kmeans_SDP_simple} we obtain an even simpler SDP relaxation. 
		\begin{coro}
			The optimal value of the following SDP constitutes a lower bound on $R_0^\star$:
			\begin{equation}
				\tag{${\mathcal R}_1$}
				\label{eq:kmeans_SDP_simple_1}
				\begin{array}{ccll}
					R_1^\star=&\min&\displaystyle\tr(\bm X^\top\bm X)-\tr(\bm X^\top\bm X\bm W_1)-\tr(\bm X^\top\bm X\bm W_2)\\
					&\st& \bm W_1,\bm W_2\in \RR_+^{N\times N}\\
					&&\tr(\bm W_1)=1,\;\tr(\bm W_2)=K-1\\ 
					&&\displaystyle\bm W_{1}\mathbf e+\bm W_2\mathbf e=\mathbf e	\\
					&&\bm W_1\succeq\bm 0,\;\bm W_2\succeq\bm 0\\
					&&\mathbf e_1^\top\bm W_1\mathbf e=1
				\end{array}
			\end{equation}	
		\end{coro}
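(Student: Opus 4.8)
The plan is to show that \ref{eq:kmeans_SDP_simple_1} is a relaxation of \ref{eq:kmeans_SDP_simple} by exhibiting a projection of any feasible solution of the latter onto a feasible solution of the former with no larger objective value. Since the objective of \ref{eq:kmeans_SDP_simple} is $\tr(\bm X^\top\bm X)-\tr(\bm X^\top\bm X\bm W_1)-\tr(\bm X^\top\bm X\bm W_2)$ and depends only on $\bm W_1$ and $\bm W_2$, it suffices to check that, given a feasible point of \ref{eq:kmeans_SDP_simple}, the pair $(\bm W_1,\bm W_2)$ already satisfies every constraint of \ref{eq:kmeans_SDP_simple_1}. First I would read off directly from \ref{eq:kmeans_SDP_simple} the constraints $\bm W_1,\bm W_2\in\RR_+^{N\times N}$, $\tr(\bm W_1)=1$, $\tr(\bm W_2)=K-1$, $\bm W_1\mathbf e+\bm W_2\mathbf e=\mathbf e$, and $\mathbf e_1^\top\bm W_1\mathbf e=1$, all of which appear verbatim. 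The only constraint of \ref{eq:kmeans_SDP_simple_1} that is not literally present in \ref{eq:kmeans_SDP_simple} is the semidefiniteness $\bm W_1\succeq\bm 0$, $\bm W_2\succeq\bm 0$, so this is the one step that needs an argument.

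For that step I would use the two PSD block constraints in \ref{eq:kmeans_SDP_simple}, namely
\[
\begin{bmatrix}\bm \Gamma_1 & \bm\alpha_1\\ \bm\alpha_1^\top & 1\end{bmatrix}\succeq\bm 0
\quad\text{and}\quad
\begin{bmatrix}\bm \Gamma_2 & \bm\alpha_2\\ \bm\alpha_2^\top & K-1\end{bmatrix}\succeq\bm 0,
\]
together with the fact that $\bm W_i$ is the leading $N\times N$ principal block of $\bm\Gamma_i$ (inspect the displayed block form of $\bm\Gamma_i$: its top-left block is exactly $\bm W_i$). Any principal submatrix of a PSD matrix is PSD, so extracting the first $N$ rows and columns of $\begin{bmatrix}\bm\Gamma_i & \bm\alpha_i\\ \bm\alpha_i^\top & *\end{bmatrix}$ yields $\bm W_i\succeq\bm 0$ for $i=1,2$. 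This settles the last missing constraint, so $(\bm W_1,\bm W_2)$ is feasible in \ref{eq:kmeans_SDP_simple_1} and achieves the same objective value; hence $R_1^\star\le R_0^\star$, which is the claim.

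I do not anticipate a genuine obstacle here — the corollary is a straightforward ``drop variables and keep a principal submatrix'' relaxation, exactly as the surrounding text ("By eliminating the constraints \dots we obtain an even simpler SDP relaxation") advertises. The only point requiring a modicum of care is making explicit that eliminating the auxiliary variables $\bm\gamma_i,\bm\eta_i,\rho_i,\beta_i,\bm\psi_i,\bm\theta_i,\bm\Sigma_i,\bm\Theta_i$ and the block matrices $\bm\Gamma_i,\bm\alpha_i$ is harmless: none of them appears in the retained constraints or the objective of \ref{eq:kmeans_SDP_simple_1}, and the PSD block constraints — the only constraints coupling $\bm W_i$ to those auxiliary variables — are precisely what we invoke (via the principal-submatrix argument) before discarding them. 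I would phrase the write-up as: take any feasible solution of \ref{eq:kmeans_SDP_simple}; observe $\bm W_1,\bm W_2$ inherit all the listed linear constraints; observe $\bm W_i\succeq\bm 0$ as a principal submatrix of a PSD matrix; conclude feasibility in \ref{eq:kmeans_SDP_simple_1} with identical objective, hence $R_1^\star\le R_0^\star$.
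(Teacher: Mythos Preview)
Your proposal is correct and matches the paper's approach: the corollary is stated without proof, with the preceding sentence simply noting that \ref{eq:kmeans_SDP_simple_1} arises from \ref{eq:kmeans_SDP_simple} by dropping the listed constraints, and your principal-submatrix argument for $\bm W_i\succeq\bm 0$ is exactly the implicit justification.
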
	
	}
	\noindent We remark that the  formulation \ref{eq:kmeans_SDP_simple_1} is reminiscent of the well-known SDP relaxation for  $K$-means clustering~\cite{peng2007approximating}:
	\begin{equation}
		\tag{$\mathcal R_2$}
		\label{eq:kmeans_SDP_0}
		\begin{array}{ccll}
			R_2^\star=&\min&\displaystyle\tr(\bm X^\top\bm X)-\tr(\bm X^\top\bm X\bm Y)\\
			&\st& \bm Y\in \RR_+^{N\times N}\\
			&&\tr(\bm Y)=K\\
			&&\bm Y\mathbf e=\mathbf e\\
			&&\bm Y\succeq\bm 0.
		\end{array} 
	\end{equation}
	{We now derive an ordering of the optimal values of problems \ref{eq:kmeans_1}, \ref{eq:kmeans_SDP},  \ref{eq:kmeans_SDP_simple_1}, and \ref{eq:kmeans_SDP_0}. 
		\begin{thm}
			\label{thm:SDP_relations}
			We have
			\begin{equation*}
				Z^\star\geq R_0^\star\geq R_1^\star\geq R_2^\star.
			\end{equation*}
		\end{thm}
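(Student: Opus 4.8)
The plan is to verify the three inequalities $Z^\star\ge R_0^\star$, $R_0^\star\ge R_1^\star$, and $R_1^\star\ge R_2^\star$ separately and then chain them. The first two require essentially no new work. The inequality $Z^\star\ge R_0^\star$ is precisely the content of the Proposition that introduces \eqref{eq:kmeans_SDP}, since \eqref{eq:kmeans_SDP} is obtained from the exact reformulation \eqref{eq:kmeans_CPP} by adjoining the redundant symmetry‑breaking constraint $\mathbf e_1^\top\bm V_{11}\mathbf e=1$, enlarging the generalized completely positive cone to the cone of doubly nonnegative matrices, and further relaxing the single large semidefinite constraint into the block constraints indexed by $i\in[K]$. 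Likewise, $R_0^\star\ge R_1^\star$ follows by combining Corollary~\ref{coro:kmeans_SDP_simple} (which identifies $R_0^\star$ with the optimal value of \eqref{eq:kmeans_SDP_simple}) with the Corollary stating that \eqref{eq:kmeans_SDP_simple_1} lower bounds $R_0^\star$; concretely, \eqref{eq:kmeans_SDP_simple_1} is \eqref{eq:kmeans_SDP_simple} with the linear and semidefinite coupling constraints and the auxiliary variables $\bm\alpha_i,\bm\Gamma_i,\rho_i,\beta_i,\bm\gamma_i,\bm\eta_i,\bm\psi_i,\bm\theta_i,\bm\Sigma_i,\bm\Theta_i$ deleted, and since the objective depends only on $\bm W_1,\bm W_2$, removing these constraints can only decrease the optimal value. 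So I would simply cite these preceding results for the first two links.

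For the remaining inequality $R_1^\star\ge R_2^\star$ I would argue that \eqref{eq:kmeans_SDP_0} is a valid relaxation of \eqref{eq:kmeans_SDP_simple_1} via aggregation. Let $(\bm W_1,\bm W_2)$ be any feasible point of \eqref{eq:kmeans_SDP_simple_1} and set $\bm Y:=\bm W_1+\bm W_2$. Then $\bm Y\ge\bm 0$ and $\bm Y\succeq\bm 0$, because $\RR_+^{N\times N}$ and $\mathbb S_+^N$ are both closed under addition; moreover $\tr(\bm Y)=\tr(\bm W_1)+\tr(\bm W_2)=1+(K-1)=K$, and $\bm Y\mathbf e=\bm W_1\mathbf e+\bm W_2\mathbf e=\mathbf e$. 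Hence $\bm Y$ is feasible in \eqref{eq:kmeans_SDP_0}. By linearity of the trace, $\tr(\bm X^\top\bm X\bm Y)=\tr(\bm X^\top\bm X\bm W_1)+\tr(\bm X^\top\bm X\bm W_2)$, so $\bm Y$ attains exactly the same objective value as $(\bm W_1,\bm W_2)$; taking the infimum over all feasible $(\bm W_1,\bm W_2)$ yields $R_2^\star\le R_1^\star$. Chaining the three inequalities then completes the proof.

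I do not anticipate a genuine obstacle here; the only point that needs a moment's care is that the symmetry‑breaking constraint $\mathbf e_1^\top\bm W_1\mathbf e=1$ present in \eqref{eq:kmeans_SDP_simple_1} has no counterpart in \eqref{eq:kmeans_SDP_0} and is simply discarded when passing to $\bm Y$, which cannot compromise feasibility in the coarser problem. It may be worth adding a remark afterwards that each of the three inequalities can be strict in general, which is exactly what makes the proposed relaxation \eqref{eq:kmeans_SDP} (equivalently \eqref{eq:kmeans_SDP_simple}) tighter than the classical Peng–Wei relaxation \eqref{eq:kmeans_SDP_0}, and that the chain in particular re‑derives that all of these SDPs lower bound the $K$‑means optimal value $Z^\star$.
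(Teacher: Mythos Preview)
Your proposal is correct and matches the paper's proof essentially verbatim: the paper also dispatches the first two inequalities by citing the preceding results (``hold by construction'') and proves $R_1^\star\ge R_2^\star$ by setting $\bm Y=\bm W_1+\bm W_2$ and checking feasibility plus equality of objective values. Your additional verification of each constraint and the remark about discarding the symmetry-breaking constraint are fine elaborations but add nothing new.
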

		\begin{proof}
			The first and the second inequalities hold by construction. {To prove the third inequality, consider any feasible solution $(\bm W_1,\bm W_2)$ to \ref{eq:kmeans_SDP_simple_1}. Then, the solution $\bm Y=\bm W_1+\bm W_2$ is feasible to \ref{eq:kmeans_SDP_0} and yields the same objective value, which completes the proof. }
		\end{proof}
		Obtaining any estimations of the best cluster assignment using optimal solutions of problem \ref{eq:kmeans_SDP_0} is a non-trivial endeavor. If we have \emph{exact recovery}, \ie, $Z^\star=R_2^\star$, then an optimal solution of \ref{eq:kmeans_SDP_0} assumes the form
		\begin{equation}
			\label{eq:exact_recovery_solution}
			\bm Y=\sum_{i\in[K]}\frac{1}{\mathbf e^\top\bm{\pi}_i}\bm{\pi}_i\bm{\pi}_i^\top,
		\end{equation}
		where $\bm \pi_i$ is the assigment vector for the $i$-th cluster. Such a solution $\bm Y$ allows for an easy identification of the clusters. If there is no exact recovery then a few additional steps need to be carried out. In \cite{peng2007approximating}, an approximate cluster assignment is obtained by solving exactly another $K$-means clustering problem on a lower dimensional data set whose computational complexity scales with $\mathcal O(N^{(K-1)^2})$. If the solution of the SDP relaxation \ref{eq:kmeans_SDP_0} is close to the exact recovery solution  \eqref{eq:exact_recovery_solution}, then the columns of the matrix $\bm Y\bm X$ will comprise \emph{denoised} data points that are near to the respective optimal cluster centroids. In \cite{mixon2016clustering}, this strengthened signal is leveraged to identify the clusters of the original data points.
		
		The promising result portrayed in Theorem \ref{thm:SDP_relations} implies that any well-constructed rounding scheme that utilizes the improved formulation \ref{eq:kmeans_SDP} (or \ref{eq:kmeans_SDP_simple_1}) will never generate inferior cluster assignments to the ones from  schemes that employ the formulation \ref{eq:kmeans_SDP_0}. 
		Our new SDP relaxation further inspires us to devise an improved approximation algorithm for the $K$-means clustering problem. The central idea of the algorithm is to construct high quality estimates of the cluster assignment vectors $(\bm{\pi}_i)_{i\in[K]}$ using the solution~$(\bm V_{i})_{i\in[K]}$, as follows:
		\begin{equation*}
			\bm\pi_i=\bm V_i\mathbf e\qquad\forall i\in[K]. 
		\end{equation*}
		To eliminate any symmetric solutions, the algorithm gradually introduces symmetry breaking constraints $\mathbf e_{n_i}^\top\bm V_i\mathbf e=1$, $i\geq 2$, to \ref{eq:kmeans_SDP}, where the indices $n_i$, $i\geq 2$, are chosen judiciously. 
		The main component of the algorithm runs in $K$ iterations and proceeds as follows. It first solves the problem \ref{eq:kmeans_SDP} and records its optimal solution~$(\bm V^\star_i)_{i\in[K]}$. In each of the subsequent iterations $k=2,\dots,K$, {the} algorithm  identifies the best unassigned data point $\bm x_n$ for the $k$-th cluster. Here, the best data point corresponds to the index $n$ that maximizes the quantity $\mathbf e_n^\top\bm V^\star_k\mathbf e$. For this index $n$, {the} algorithm then appends the constraint  $\mathbf e_n^\top\bm V^\star_k\mathbf e=1$ to the problem \ref{eq:kmeans_SDP}, which breaks any symmetry in the solution $(\bm V_i)_{i\geq k}$. The algorithm then solves the augmented problem and proceeds to the next iteration. At the end of {the} iterations, the algorithm assigns each data point $\bm x_n$ to the cluster $k$ that maximizes the quantity $\mathbf e_n^\top\bm V^\star_k\mathbf e$. The algorithm concludes with a single step of  Lloyd's algorithm. A summary of the overall procedure is given in Algorithm \ref{alg:kmeans}. 
		\begin{algorithm}[h!]
			\caption{Approximation Algorithm for $K$-Means Clustering}
			\begin{algorithmic}
				\STATE \textbf{Input:} Data matrix $\bm X \in \RR^{D \times N}$ and number of clusters $K$.
				\STATE \textbf{Initialization:} Let $\bm V^\star_i=\bm 0$ and $\mathcal P_i=\emptyset$ for all $i=1,\dots,K$, and $n_k=0$ for all $k =2,\dots,K$.  
				\STATE Solve the semidefinite program \ref{eq:kmeans_SDP} with input $\bm X$ and $K$. Update $(\bm V^\star_i)_{i\in[K]}$ with the current  solution. 
				\FOR {$k=2,\ldots,K$}
				\STATE Update $\displaystyle n_k=\argmax_{n\in[N]}\mathbf e_n^\top\bm V^\star_k\mathbf e$. {Break ties arbitarily}. 
				\STATE Append the constraints $\mathbf e_{n_i}^\top\bm V_i\mathbf e=1$ $\forall i=2,\dots,k$ to the problem \ref{eq:kmeans_SDP}. 
				\STATE Solve the resulting SDP with input $\bm X$ and $K$. Update $(\bm V^\star_i)_{i\in[K]}$. 
				\ENDFOR
				\FOR {$n=1,\ldots,N$}
				\STATE Set $k^\star=\displaystyle\argmax_{k\in[K]}\mathbf e_n^\top\bm V^\star_k\mathbf e$ and update $\mathcal P_{k^\star}=\mathcal P_{k^\star}\cup\{n\}$. {Break ties arbitarily.}
				\ENDFOR
				\STATE Compute the centroids $\bm c_k=\frac{1}{|\mathcal P_k|}\sum_{n\in\mathcal P_k}\bm x_n$ for all $k=1,\dots,K$. 
				\STATE Reset $\mathcal P_k=\emptyset$  for all $k=1,\dots,K$.
				\FOR {$n=1,\ldots,N$}
				\STATE Set $k^\star=\displaystyle\argmin_{k\in[K]}\|\bm x_n-\bm c_k\|$ and update $\mathcal P_{k^\star}=\mathcal P_{k^\star}\cup\{n\}$. {Break ties arbitarily.}  
				\ENDFOR
				\STATE \textbf{Output:} Clusters $\mathcal P_1,\dots,\mathcal P_K$. 
			\end{algorithmic}
			\label{alg:kmeans}
		\end{algorithm}

		\section{Numerical Results} 
		\label{sec:experiments}
		In this section, we assess the performance of {the}   algorithm described in Section~\ref{sec:kmeans_algo}. All optimization problems are solved with MOSEK v8 using the YALMIP interface \cite{yalmip} on a 16-core 3.4 GHz computer with 32 GB RAM. 
		
		We compare the performance of  Algorithm \ref{alg:kmeans} with {the} classical Lloyd's algorithm and the approximation algorithm\footnote{MATLAB implementation of the algorithm is available at \url{https://github.com/solevillar/kmeans_sdp}.} proposed in \cite{mixon2016clustering} on $50$ randomly generated instances of the $K$-means clustering problem. While our proposed algorithm employs the improved formulation \ref{eq:kmeans_SDP} to identify the clusters, the algorithm in  \cite{mixon2016clustering} utilizes the existing SDP relaxation \ref{eq:kmeans_SDP_0}. 
		
		{
			We adopt the setting of \cite{awasthi2015relax} and consider $N$  data points in $\RR^D$  supported on $K$ balls of the same radius~$r$.  We set $K=3$, $N=75$, and   $r=2$, and run the experiment for $D=2,\dots,6$. All  results are averaged over $50$ trials generated as follows. In each trial, we set the centers of the balls to $\bm 0$, ${\mathbf e}/{\sqrt D}$, and ${c\mathbf e}/{\sqrt D}$, where  the scalar $c$ is drawn uniformly at random from interval $[10, 20]$. This setting ensures that the first two balls are always separated by unit distance irrespective of $D$, while the third ball is placed further with a  distance $c$ from the origin. Next, we sample  $N/ K$ points uniformly at random from each ball. The resulting $N$ data points are then input to the three  algorithms. 
		}

		
		Table \ref{tab:gaps} reports the quality of  cluster assignments generated from  Algorithm~\ref{alg:kmeans} relative to the ones generated from the algorithm in \cite{mixon2016clustering} and the Lloyd's algorithm. {The mean in the table represents average percentage improvement of the true objective value from Algorithm~\ref{alg:kmeans} relative to other algorithms. The $p$th percentile is the value below which $p\%$ of these improvements  may be found. 
			We find that our proposed algorithm significantly outperforms both the other algorithms in view of the mean and the $95$th percentile statistics.   We further observe that the improvements deteriorate as problem dimension $D$ increases. This should be expected as the clusters become more apparent in a higher dimension, which makes them easier to be identified by all the algorithms. 
		The percentile statistics further indicate that while the other algorithms can  generate extremely poor cluster assignments, our algorithm consistently produces high quality cluster assignments and rarely loses by more than $5\%$. 
		\begin{table}[h!]
			\color{black}
			\centering
			\begin{tabular}{c|c|c|c|c|c|c|c|}
				\multicolumn{2}{c}{}&\multicolumn{6}{c}{Statistic} \\\cline{3-8}
				\multicolumn{1}{c}{}	&&
				\multicolumn{2}{c|}{Mean} &
				\multicolumn{2}{c|}{5th Percentile} &
				\multicolumn{2}{c|}{95th Percentile} \\[0.0mm] \cline{2-8}
				\multirow{ 5}{*}{$D$} 
				&2 &  47.4\% & 26.6\% &  -4.4\% & 17.6\% &  186.7\% & 36.5\%  \\
				&3 & 21.3\% & 18.3\%  & -2.3\% & 10.9\% & 168.9\%  & 25.5\%  \\
				&4 & 5.7\% & 14.5\%  & -1.5\% & 9.5\% & 10.8\%  & 20.8\% \\
				&5 & 9.5\% & 11.1\%  & -2.1\% & 7.3\% & 125.8\%  & 14.5\%  \\
				&6 & 4.8\% & 10.9\%  & -0.7\% & 7.5\% & 8.4\%  & 13.8\% \\
				\cline{2-8}
				\cline{2-8}
			\end{tabular}
			\caption{\color{black} Improvement of the true $K$-means objective value of the cluster assignment generated from the Algorithm~\ref{alg:kmeans} relative to the ones generated from the algorithm in \cite{mixon2016clustering} (left) and the Lloyd's Algorithm (right). 
				\label{tab:gaps}}
		\end{table}	%
		
		{
			\paragraph{Acknowledgements.}The authors are thankful to the Associate Editor and two anonymous referees for their constructive comments and suggestions that led to substantial improvements of the paper. This research was supported by the National Science Foundation grant no.~1752125.
		}
		
		
		
		
		
		\bibliographystyle{plain}
		\bibliography{bibliography}

\begin{thebibliography}{10}

\bibitem{absil2009optimization}
P.-A. Absil, R.~Mahony, and R.~Sepulchre.
\newblock {\em Optimization algorithms on matrix manifolds}.
\newblock Princeton University Press, 2009.

\bibitem{aloise2009np}
D.~Aloise, A.~Deshpande, P.~Hansen, and P.~Popat.
\newblock {NP}-hardness of {E}uclidean sum-of-squares clustering.
\newblock {\em Machine learning}, 75(2):245--248, 2009.

\bibitem{asteris2014nonnegative}
M.~Asteris, D.~Papailiopoulos, and A.~Dimakis.
\newblock Nonnegative sparse {PCA} with provable guarantees.
\newblock In {\em International Conference on Machine Learning}, pages
  1728--1736, 2014.

\bibitem{awasthi2015relax}
P.~Awasthi, A.~S. Bandeira, M.~Charikar, R.~Krishnaswamy, S.~Villar, and
  R.~Ward.
\newblock Relax, no need to round: Integrality of clustering formulations.
\newblock In {\em Conference on Innovations in Theoretical Computer Science},
  pages 191--200. ACM, 2015.

\bibitem{bomze2002solving}
I.~M. Bomze and E.~de~Klerk.
\newblock Solving standard quadratic optimization problems via linear,
  semidefinite and copositive programming.
\newblock {\em Journal of Global Optimization}, 24(2):163--185, 2002.

\bibitem{burer2012copositive}
S.~Burer.
\newblock Copositive programming.
\newblock In {\em Handbook on semidefinite, conic and polynomial optimization},
  pages 201--218. Springer, 2012.

\bibitem{burer2012representing}
S.~Burer and H.~Dong.
\newblock Representing quadratically constrained quadratic programs as
  generalized copositive programs.
\newblock {\em Operations Research Letters}, 40(3):203--206, 2012.

\bibitem{DKP02:copositive}
E.~de~Klerk and D.~V. Pasechnik.
\newblock Approximation of the stability number of a graph via copositive
  programming.
\newblock {\em SIAM Journal on Optimization}, 12(4):875--892, 2002.

\bibitem{ding2005equivalence}
C.~Ding, X.~He, and H.~D. Simon.
\newblock On the equivalence of nonnegative matrix factorization and spectral
  clustering.
\newblock In {\em International Conference on Data Mining}, pages 606--610.
  SIAM, 2005.

\bibitem{ding2006orthogonal}
C.~Ding, T.~Li, W.~Peng, and H.~Park.
\newblock Orthogonal nonnegative matrix t-factorizations for clustering.
\newblock In {\em International Conference on Knowledge Discovery and Data
  Mining}, pages 126--135, 2006.

\bibitem{hansen1997cluster}
P.~Hansen and B.~Jaumard.
\newblock Cluster analysis and mathematical programming.
\newblock {\em Mathematical Programming}, 79(1-3):191--215, 1997.

\bibitem{iguchi2015tightness}
T.~Iguchi, D.~G. Mixon, J.~Peterson, and S.~Villar.
\newblock On the tightness of an {SDP} relaxation of k-means.
\newblock {\em arXiv preprint arXiv:1505.04778}, 2015.

\bibitem{jain2010data}
A.~K. Jain.
\newblock Data clustering: 50 years beyond k-means.
\newblock {\em Pattern recognition letters}, 31(8):651--666, 2010.

\bibitem{kaufman2009finding}
L.~Kaufman and P.~J. Rousseeuw.
\newblock {\em Finding Groups in Data: An Introduction to Cluster Analysis},
  volume 344.
\newblock John Wiley \& Sons, 2009.

\bibitem{kuang2012symmetric}
D.~Kuang, C.~Ding, and H.~Park.
\newblock Symmetric nonnegative matrix factorization for graph clustering.
\newblock In {\em International Conference on Data Mining}, pages 106--117,
  2012.

\bibitem{lasserre2009convexity}
J.~B. Lasserre.
\newblock Convexity in semialgebraic geometry and polynomial optimization.
\newblock {\em SIAM Journal on Optimization}, 19(4):1995--2014, 2009.

\bibitem{li2006relationships}
T.~Li and C.~Ding.
\newblock The relationships among various nonnegative matrix factorization
  methods for clustering.
\newblock In {\em International Conference on Data Mining}, pages 362--371,
  2006.

\bibitem{lloyd1982least}
S.~Lloyd.
\newblock Least squares quantization in {PCM}.
\newblock {\em IEEE Transactions on Information Theory}, 28(2):129--137, 1982.

\bibitem{yalmip}
J.~L\"ofberg.
\newblock {YALMIP}: {A} toolbox for modeling and optimization in {MATLAB}.
\newblock In {\em IEEE International Symposium on Computer Aided Control
  Systems Design}, pages 284--289, 2004.

\bibitem{macqueen1967}
J.~MacQueen.
\newblock Some methods for classification and analysis of multivariate
  observations.
\newblock In {\em Berkeley Symposium on Mathematical Statistics and
  Probability}, pages 281--297, Berkeley, Calif., 1967.

\bibitem{mixon2016clustering}
D.~G. Mixon, S.~Villar, and R.~Ward.
\newblock Clustering subgaussian mixtures by semidefinite programming.
\newblock {\em Forthcoming in Information and Inference: A Journal of the IMA},
  2017.

\bibitem{parrilo2000structured}
P.~A. Parrilo.
\newblock {\em Structured semidefinite programs and semialgebraic geometry
  methods in robustness and optimization}.
\newblock PhD thesis, California Institute of Technology, 2000.

\bibitem{peng2007approximating}
J.~Peng and Y.~Wei.
\newblock Approximating k-means-type clustering via semidefinite programming.
\newblock {\em SIAM Journal on Optimization}, 18(1):186--205, 2007.

\bibitem{pompili2014two}
F.~Pompili, N.~Gillis, P.-A. Absil, and F.~Glineur.
\newblock Two algorithms for orthogonal nonnegative matrix factorization with
  application to clustering.
\newblock {\em Neurocomputing}, 141:15--25, 2014.

\bibitem{rujeerapaiboon2017size}
N.~Rujeerapaiboon, K.~Schindler, D.~Kuhn, and W.~Wiesemann.
\newblock Size matters: {C}ardinality-constrained clustering and outlier
  detection via conic optimization.
\newblock {\em arXiv preprint arXiv:1705.07837}, 2017.

\bibitem{sturm2003cones}
J.~F. Sturm and S.~Zhang.
\newblock On cones of nonnegative quadratic functions.
\newblock {\em Mathematics of Operations Research}, 28(2):246--267, 2003.

\end{thebibliography}
		

	\end{document}